\newtheorem{corollary}{Corollary}
\newtheorem{mydef}{Definition}
\newtheorem{theo}{Theorem}
\newtheorem{ass}{Assumption}
\newtheorem{lemma}{Lemma}
\newtheorem{remark}{Remark}
\def\b0{\mbox{{\bf 0}}}
\def\balpha{\mbox{$\boldsymbol{\alpha}$}}
\def\bbeta{\mbox{$\boldsymbol{\eta}$}}
\def\blambda{\mbox{$\boldsymbol{\lambda}$}}
\def\btheta{\mbox{$\boldsymbol{\theta}$}}
\def\bgamma{\mbox{$\boldsymbol{\gamma}$}}
\def\bargamma{\mbox{$\boldsymbol{\bar{\gamma}}$}}
\def\bartheta{\mbox{$\boldsymbol{\bar{\theta}}$}}
\def\bSigma{\mbox{$\boldsymbol{\Sigma}$}}
\def\bPhi{\mathbf{\Phi}}
\def\ba{\mbox{{\bf a}}}
\def\bd{\mbox{{\bf d}}}
\def\be{\mbox{{\bf e}}}
\def\bff{\mbox{{\bf f}}}
\def\bh{\mbox{{\bf h}}}
\def\bs{\mbox{{\bf s}}}
\def\bu{\mbox{{\bf u}}}
\def\bv{\mbox{{\bf v}}}
\def\bx{\mbox{{\bf x}}}
\def\by{\mbox{{\bf y}}}
\def\bz{\mbox{{\bf z}}}
\def\bA{\mbox{{\bf A}}}
\def\bD{\mbox{{\bf D}}}
\def\bF{\mbox{{\bf F}}}
\def\bH{\mbox{{\bf H}}}
\def\bI{\mbox{{\bf I}}}
\def\bJ{\mbox{{\bf J}}}
\def\bL{\mbox{{\bf L}}}
\def\bP{\mbox{{\bf P}}}
\def\bQ{\mbox{{\bf Q}}}
\def\bU{\mbox{{\bf U}}}
\def\bbR{\mathbb{{R}}}
\def\cN{\mbox{${\mathcal{N}}$}}
\def\cP{\mbox{${\mathcal{P}}$}}
\def\cQ{\mbox{${\mathcal{Q}}$}}
\def\dbe{\mbox{$\dot{\mbox{\bf e}}$}}
\def\dbs{\mbox{$\dot{\mbox{\bf s}}$}}
\def\dbx{\mbox{$\dot{\mbox{\bf x}}$}}
\begin{document}
\title{Probability of Stability of Synchronization in Random Networks of Mismatched Oscillators}
\author{Saeed~Manaffam,~\IEEEmembership{Student Member,~IEEE,}
        and~Alireza~Seyedi,~\IEEEmembership{Senior Member,~IEEE}
\thanks{The authors are with the Department
of Electrical Engineering and Computer Science, University of Central Florida, Orlando,
FL, 32816 USA e-mail: saeedmanaffam@knights.ucf.edu and alireza.seyedi@ieee.org.}
}
\maketitle
\date{\today}

\begin{abstract}
The stability of synchronization state in networks of oscillators are studied under the assumption that oscillators and their couplings have slightly mismatched parameters. A generalized master stability function is provided that takes the mismatches into account. Using this master stability function a lower bound on the probability of synchronization is derived for regular and random network models. The probability of stability of synchronization is then used to study the phase transition behavior of the networks. Numerical examples using van der Pol oscillators are used to illustrate the results and verify the validity of the analysis. Moreover, the synchronization trend as a function of statistics of mismatches in the coupling and local dynamics is investigated using this numerical example.
\end{abstract}

\textbf{\textit{Index terms}:} Synchronization, random networks, Erd\"os-R\'enyi networks, small-world networks, probability of stability, parameter mismatch, van der Pol oscillator.

\section{Introduction}
The problem of synchronization in a network of identical oscillators was first introduced by Wiener \cite{Wiener65,Arenas08}. Pursuit of the idea by Winfree in his pioneering work \cite{Winfree67} led to this problem being recognized as being important and relevant in many fields of research including biology, physics, and engineering \cite{Newman10}. More recently, the introduction of the framework of master stability function by Pecora and Carroll \cite{Pecora98}, enabled the investigation of the impacts of network structure and the dynamical properties of individual nodes on the stability of the synchronization state \cite{Pecora98,Motter07}. Following the idea of using master stability function to study the network of oscillators, most efforts have been concentrated on the impact of the topology (structure) of different types of networks on the stability of the synchronization state: In \cite{Pecora98}, the short and long wavelength bifurcation phenomena have been studied on regular networks (lattices). Other works look at linking the topological properties such as minimum, maximum and average node degrees, to the stability of the synchronization state in networks \cite{Motter07,Wang02,Manaffam13,Zhou06}. Due to the interesting properties of small-world networks, which have been introduced in the seminal work of Watts and Strogatz \cite{Watts98,Newman99}, most of the following studies were focused on the small-world and scale-free networks. It has been shown that due to better dynamical flow (efficient communications), the synchronization can be stabilized more easily in small-world networks compared to regular networks \cite{Arenas08,Manaffam13,Wu08}. It has also been shown that the synchronizability of networks improves in homogeneous networks in contrast to heterogeneous ones \cite{Nishikawa03}.

Although the study of the synchronization of networks of the identical nodes appears to be matured, few attempts have been made to study networks with nonidentical nodes or couplings. The experiments reported in \cite{Restrepo04} suggest that in the networks where the oscillator dynamics and their couplings vary slightly from each other, the oscillators can be nearly synchronized. That is, the states converge to the vicinity of a certain trajectory (synchronization manifold). In \cite{Sun09} and \cite{Sorrentino11}, a sensitivity analysis of synchronization have been performed for a network of mismatched oscillators. It has been shown that near-synchronization behavior can occur in a network of mismatched oscillators using master stability function. The general stability of the synchronization in network of  dynamical systems with nonidentical dynamics for each node is studied in \cite{Xiang07} and \cite{Zhao12} using the Lyapunov direct method. In \cite{Acharyya12}, an approximate master stability function is proposed and the coupling strength is optimized to achieve ``best synchronization properties''.

In this paper, we investigate the synchronization of a network of mismatched oscillators with mismatched couplings. Our formulation also allows the consideration of uncertainties in network link weights, thus generalizing \cite{Sorrentino11} in addition to its main contributions. Since in presence of mismatch there is no unique synchronization state in the network, we use the concept of $\varepsilon$-synchronization \cite{Sun09}, where the steady states of the nodes in the network fall into an $\varepsilon$-neighborhood of a certain trajectory (synchronization manifold). We then use a generalized master stability function to study the behavior of the network around the synchronization state. The proposed generalized master stability function bounds the oscillator states to a neighborhood of average synchronization trajectory as a function of Lyapunov exponents of the dynamical network. These Lyapunov exponents, in turn, are related to eigenvalues of the Laplacian matrix of the network. We then provide a probabilistic treatment of synchronization behavior in terms of mismatch parameters for regular and random network models. We calculate probability of stability of synchronization, and use it to investigate phase transitions of the synchronization in the network as the network and node parameters vary. Finally, we verify our analytical results by a numerical example for a network of van der Pol oscillators \cite{Poland94} with mismatched oscillators and couplings.

\section{Notation and Main Variables}
The set of real (column) $n$-vectors is denoted by $\bbR^{n}$ and the set of real $m\times n$ matrices is denoted by $\bbR^{m\times n}$. We refer to the set of non-negative real numbers by $\bbR_{+}$. Matrices and vectors are denoted by capital and lower-case bold letters, respectively. Identity matrix is shown by \bI. The Euclidean ($\mathcal{L}_{2}$) vector norm is represented by $\lVert\cdot\rVert $. When applied to a matrix, $\lVert\cdot\rVert $ denotes the $\mathcal{L}_{2}$ induced matrix norm, $\lVert\bA\rVert =\sqrt{\lambda_{\max}(\bA^{T}\bA)}$. Table \ref{tab:var} summarizes the main variables used.
\begin{table}[!h]
\begin{center}\caption{Main variables}\label{tab:var}
\begin{tabular}{ll}
\hline
\bf{Variable} & \bf{Description}\\
\hline\hline
$\bx_{i}$& State vector of node $i$\\\hline
$\bgamma_{i}$& Parameters vector of node $i$\\\hline
$\btheta_{ji}$& Parameter vector of coupling from node $j$ to node $i$\\\hline
$\mathbf{f}(\bx_{i},\bgamma_{i})$& Dynamics function of node $i$ \\\hline
$\bh(\bx_{j},\bx_{i},\btheta_{ji})$& Coupling function from node $j$ to node $i$ \\\hline
$\bu_{i}$&Input vector for node $i$\\\hline
$\bF_{\bx}$&Jacobian of vector $\bff$ with respect to $\bx$\\\hline
$\bF_{\bgamma}$&Jacobian of vector $\bff$ with respect to $\bgamma$\\\hline
$\bH_{\bx}$&Jacobian of coupling vector $\bh$ with respect to $\bx$\\\hline
$\bH_{\by}$&Jacobian of coupling vector $\bh$ with respect to $\by$\\\hline
$\bH_{\btheta}$&Jacobian of coupling vector $\bh$ with respect to $\btheta$\\\hline
\end{tabular}
\end{center}
\end{table}%

\section{System Description}
Consider a network of $N$ oscillators, indexed by $\cN = \{1~ \cdots ~N\}$. Assume that the dynamics of each isolated oscillator is governed by 
\begin{align}
\dbx_{i}=\bff(\bx_{i},\bgamma_{i}), \nonumber
\end{align}
where $\bx_{i}\in\bbR^{n}$ and $\bgamma_{i}\in\cP\subseteq \bbR^{p}$ are the state and parameter vectors of local dynamics of node $i$, respectively. $\cP$ denotes the set of possible parameter vectors, and $\bff:\bbR^{n+p}\rightarrow\bbR^{n}$ describes the local dynamics of an isolated node.

The dynamics of coupled oscillators are given as
\begin{eqnarray}
\dbx_{i}&=&\bff(\bx_{i},\bgamma_{i})+\sum_{i,j\in \,\cN}a_{ij}\bh(\bx_{j},\,\bx_{i},\,\btheta_{ij}), \label{eq: NetworkDynamics}
\end{eqnarray}
where $\btheta_{ij}\in\cQ\subseteq\bbR^q$ is the parameter vector of coupling dynamics from node $j$ to node $i$, $\cQ$ denotes the set of possible parameter values for couplings. The adjacency matrix of the network is $\bA=[a_{ij}]$, where $a_{ij}\in\bbR_{+}$ is the weight of the link from node $j$ to node $i$. There is no connection if $a_{ij}=0$. Note that we allow the more general case of directed and wighted networks. Moreover, $\bh:\bbR^{2n+q}\rightarrow\bbR^{n}$ models the coupling from node $j$ to node $i$. We assume that $\bh(\bx,\by,\btheta)$ is Hamiltonian. That is, we assume that $\bH_{\bx}=-\bH_{\by}$, where $\bH_{\bx}$ and $\bH_{\by}$ denotes the Jacobians of $\bh(\bx,\by,\btheta)$ with respect to $\bx$ and $\by$, respectively. This is a very general assumption and encompasses the {\em diffusive coupling} model predominantly used in the literature \cite{Nishikawa03,Sorrentino11,Acharyya12}, where it is assumed that $\bh(\bx_1,\bx_2,[\btheta_1~\btheta_2])=\tilde{\bh}(\bx_1,\btheta_1)-\tilde{\bh}(\bx_2,\btheta_2)$.

Note that this generalized model also incorporates uncertainties in the adjacency matrix of the network, $\bA=[a_{ij}]+[\delta a_{ij}]$, considered in \cite{Sorrentino11}, by absorbing $\delta a_{ij}$ into $\theta_{ij}$, i.e., $\btheta'_{ij}=[\btheta_{ij}^T~\delta a_{ij}]^T$.

\section{Invariant Synchronization Manifold}
Let $\bs$ be a weighted average of the trajectories of all oscillators
\begin{eqnarray}
	\bs&=&\sum_{i\in\,\cN}\alpha_{i}\bx_{i},\label{eq: bs}
\end{eqnarray}
where $\sum_{i\in\,\cN}\alpha_{i}=1$. Define the deviation of the trajectory of oscillator $i$ from \bs~as
\begin{eqnarray}
	\be_{i}&=&\bx_{i}-\bs.\label{eq: error}
\end{eqnarray}
Moreover, let $\bL=[l_{ij}]$ be the Laplacian matrix of the network \cite{Mohar91},
\begin{equation}
	\bL=\mbox{diag}([d^{\scriptsize\mbox{in}}_{1} \cdots d^{\scriptsize\mbox{in}}_{N}])-\bA,\nonumber
\end{equation}
where $d_{i}^{\scriptsize\mbox{in}}=\sum_{j\,\in\,\cN}a_{ij}$ is the in-degree of node $i$. 

\begin{lemma}
	$\bs=\sum_{i\in\,\cN}\alpha_{i}\bx_{i}$ is an invariant synchronization manifold of the network if $\balpha=[\alpha_1\cdots\alpha_N]^T$ is a null vector of $\bL^T$. 
\end{lemma}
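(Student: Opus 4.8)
The plan is to prove invariance by direct differentiation: substitute the network dynamics \eqref{eq: NetworkDynamics} into the defining relation \eqref{eq: bs} and show that the aggregate coupling term drops out of the evolution of $\bs$ precisely when $\balpha$ is a null vector of $\bL^T$. Since $\sum_{i\in\cN}\alpha_i=1$, differentiating $\bs=\sum_{i\in\cN}\alpha_i\bx_i$ and inserting \eqref{eq: NetworkDynamics} yields
\begin{align}
\dbs=\sum_{i\in\cN}\alpha_i\,\bff(\bx_i,\bgamma_i)+\sum_{i\in\cN}\alpha_i\sum_{j\in\cN}a_{ij}\,\bh(\bx_j,\bx_i,\btheta_{ij}).\nonumber
\end{align}
Everything reduces to controlling the coupling double sum: I want it to vanish on the synchronized set $\bx_i=\bs$ for all $i$, and to contribute nothing to $\dbs$ at first order in the deviations $\be_i=\bx_i-\bs$ of \eqref{eq: error}; then $\bs$ satisfies the closed ``averaged'' equation $\dbs=\sum_{i\in\cN}\alpha_i\bff(\bs,\bgamma_i)$, and the set $\{\bx_i=\bs\}$ is the claimed invariant manifold.

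The key step combines the Hamiltonian hypothesis with the null-vector condition. Expanding $\bh(\bx_j,\bx_i,\btheta_{ij})$ about $(\bs,\bs,\btheta_{ij})$, the zeroth-order contribution is a weighted combination of the on-diagonal values $\bh(\bs,\bs,\btheta_{ij})$ --- a topology-independent drift that is simply $\b0$ for diffusive-type couplings, which vanish when the two states coincide --- while the first-order contribution of node $i$ is $\sum_{j\in\cN}a_{ij}\big(\bH_{\bx}\be_j+\bH_{\by}\be_i\big)$. Here $\bH_{\bx}=-\bH_{\by}$ is essential: it collapses this to $-\bH_{\bx}\sum_{j\in\cN}l_{ij}\be_j$, with $l_{ij}$ the $(i,j)$ entry of $\bL$ (without the Hamiltonian assumption an $\big(\bH_{\bx}+\bH_{\by}\big)$-weighted deviation term would survive). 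Taking the $\alpha_i$-weighted sum over $i$, $\sum_{i\in\cN}\alpha_i\sum_{j\in\cN}l_{ij}\be_j=\sum_{j\in\cN}\big(\sum_{i\in\cN}\alpha_i l_{ij}\big)\be_j=\sum_{j\in\cN}(\bL^T\balpha)_j\,\be_j=\b0$ exactly because $\balpha$ is a null vector of $\bL^T$. Hence the coupling leaves $\dbs$ unaffected to the relevant order.

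It then remains to check that the synchronized set is forward invariant under the reduced dynamics. When $\bx_i=\bs$ for all $i$ the coupling term is $\b0$ (as noted, diffusive-type couplings vanish when states coincide; for linear coupling this is the zero-row-sum property of $\bL$ applied to a uniform profile), so $\dbx_i=\bff(\bs,\bgamma_i)$ and the common value evolves by $\dbs=\sum_{i\in\cN}\alpha_i\bff(\bs,\bgamma_i)$, the averaged vector field; the compatibility constraint $\sum_{i\in\cN}\alpha_i\be_i=\b0$ on the deviations is preserved automatically from $\sum_{i\in\cN}\alpha_i=1$, so the flow cannot leave the manifold.

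The main obstacle I anticipate is making the coupling cancellation hold beyond the linearization for a genuinely nonlinear Hamiltonian $\bh$: the exact sum $\sum_{i,j}\alpha_i a_{ij}\bh(\bx_j,\bx_i,\btheta_{ij})$ vanishes identically only for state-linear coupling (or under enough symmetry to pair off the $a_{ij}$-weighted terms), so ``invariant'' should be read as referring to the synchronized set itself and to the leading-order normal dynamics that the generalized master stability function will use. I would also make explicit that mismatch in the coupling parameters enters only the topology-independent drift, not the network-weighted deviation term that $\bL^T\balpha=\b0$ annihilates, which is what keeps the reduction valid for mismatched couplings.
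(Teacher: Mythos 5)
Your proof is correct and follows essentially the same route as the paper: differentiate $\bs$, linearize the coupling about the manifold, use $\bH_{\by}=-\bH_{\bx}$ to collapse the first-order term to a Laplacian-weighted sum of deviations, and annihilate it with $\bL^T\balpha=\b0$ (your bookkeeping with $l_{ij}$ directly is the same computation as the paper's rearrangement of the $a_{ij}\alpha_i(\be_j-\be_i)$ sum). One small discrepancy: the paper does not assume $\bh(\bs,\bs,\bartheta)=\b0$ --- its coupling model in the numerical example does not vanish on the diagonal --- so the manifold dynamics retain the drift $\bar{d}_{\scriptsize\mbox{in}}\,\bh(\bs,\bs,\bartheta)$ as in \eqref{eq: Trajectory}; this does not affect invariance, since that term depends only on $\bs$.
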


\begin{proof} 
	Taking derivative of \eqref{eq: bs} yields
	\begin{eqnarray} \label{TrajectoryS}
		\dbs&=&\sum_{i\,\in\,\cN}\alpha_{i}\dbx_{i}\nonumber\\ 
		&=&\sum_{i\,\in\,\cN}\alpha_{i}\bff(\bs+\be_{i},\,\bargamma+\delta\bgamma_{i})\nonumber\\
		&&+\sum_{i,j\in\,\cN}a_{ij}\alpha_{i}\bh(\bs+\be_{j},\,\bs+\be_{i},\,\bartheta+\delta\btheta_{ij}),
	\end{eqnarray}
	where $\bargamma=\sum_{i\,\in\,\cN}\alpha_{i}\bgamma_{i}$, $\delta\bgamma_{i}=\bgamma_{i}-\bar{\bgamma}$, $\bartheta=\frac{1}{\bar{d}_{\scriptsize\mbox{in}}}\sum_{i,j\,\in\,\cN}\alpha_{i}a_{ij}\btheta_{ij}$, $\delta\btheta_{ij}=\btheta_{ij}-\bar{\btheta}$, and $\bar{d}_{\scriptsize\mbox{in}}=\sum_{i\in\cN}\alpha_i d^{\scriptsize\mbox{in}}_i$ is the weighted average in-degree of the network. Linearization of \eqref{TrajectoryS} around $(\bs,\,\bargamma,\,\bartheta)$ results in
	\begin{eqnarray}
		\dbs&=&\sum_{i\,\in\,\cN}\alpha_{i}\bff(\bs,\,\bargamma)+\bF_{\bgamma}\sum_{i\,\in\cN}\alpha_{i}\delta\bgamma_{i}\nonumber\\
		&&+\sum_{i,\,j\in\,\cN}a_{ij}\alpha_{i}\bh(\bs,\,\bs,\,\bartheta)+\bH_{\bx}\sum_{i,\,j\in\,\cN}a_{ij}\alpha_{i}(\be_{j}-\be_{i})\nonumber\\
		&&+\bH_{\btheta}\sum_{i,\,j\in\,\cN}a_{ij}\alpha_{i}\delta\btheta_{ji},\nonumber
	\end{eqnarray}
	where $\bH_{\bx}$ and $\bH_{\btheta}$ are Jacobians of $\bh$ with respect to its first and third variable, respectively. Recalling that $\sum_{i\in\,\cN}\alpha_{i}=1$, we have
	\begin{eqnarray}
		\dbs&=&\bff(\bs,\,\bargamma)+\bh(\bs,\,\bs,\,\bartheta)\sum_{i\,\in\,\cN}d^{\scriptsize\mbox{in}}_{i}\alpha_{i}\nonumber\\
		&&+\bH_{\bx}\sum_{i,j\in\,\cN}a_{ij}\alpha_{i}(\be_{j}-\be_{i}).\nonumber
	\end{eqnarray}

	For $\bs$ to be an invariant manifold, the last term in the above equation must be zero. This is achieved if $\alpha_{i}$ are chosen to satisfy
	\begin{eqnarray}
		\sum_{i,\,j\in\,\cN}a_{ij}\alpha_{i}(\be_{j}-\be_{i})&=&\sum_{i\,\in\,\cN}\left[\sum_{j\,\in\,\cN} (a_{ji}\alpha_{j}-a_{ij}\alpha_{i})\right] \be_{i}\nonumber\\
		&=&\b0.\label{AlphaEq}
	\end{eqnarray}
	Equation \eqref{AlphaEq}, in turn, will be satisfied if $\sum_{j\,\in\,\cN} (a_{ij}\alpha_{j}-a_{ij}\alpha_{i})=0$ for all $i\,\in\,\cN$, which in matrix form can be represented as
	\begin{equation}
		\bA^T\balpha=\mbox{diag}([d^{\scriptsize\mbox{in}}_{1} \cdots d^{\scriptsize\mbox{in}}_{N}])\balpha,\nonumber
	\end{equation}
	where $\balpha=[\alpha_{1}\cdots\alpha_{N}]$, or
	\begin{equation}
		\left[\bA^T-\mbox{diag}([d^{\scriptsize\mbox{in}}_{1} \cdots d^{\scriptsize\mbox{in}}_{N}])\right]\balpha=\b0=\bL^T\balpha.\nonumber
	\end{equation}
	That is, $\balpha$ is a null vector of $\bL^T$. 
\end{proof}

\begin{remark} 
We note that, by definition, $\bL$ has zero row sum. Thus, it is singular. Consequently, $\bL^T$ always has a null vector, $\balpha$. This means that any network has at least one invariant manifold. 
\end{remark}

\begin{remark} 
If the the network is connected, the invariant synchronization manifold is unique. This is due to the fact that for connected networks the nullity of $\bL$ is one. Thus, $\balpha$ and, therefore, $\bs$ are unique.
\end{remark}

\begin{remark} In the special case where the network is undirected, $\bL$ is symmetric. Thus, it also has zero column-sum. Consequently, $\balpha = \frac{1}{N}[1~ \cdots~1]$ is its null vector, and the invariant manifold, $\bs$, is the simple average of the trajectories.
\end{remark}

With $\alpha_{i}$ chosen such that $\bs$ is an invariant manifold, we have
\begin{eqnarray}\label{eq: Trajectory}
	 \dbs&=&\bff(\bs,\,\bargamma)+\bar{d}_{\scriptsize\mbox{in}}\,\bh(\bs,\,\bs,\,\bartheta),\\
	 \bs({0})&=&\sum_{i\,\in\,\cN}\alpha_i\bx_{i}({0}),\nonumber
\end{eqnarray}
where $\bs(0)$  and $\bx({0})$ are initial states. 

\section{Generalized Master Stability Function}
In this section we introduce a master stability function which generalizes those in \cite{Sun09} and \cite{Sorrentino11} by taking into account the parameter mismatch in the links and applies to directed and weighted networks.

As it has been shown in previous section, every connected network has a unique invariant manifold. Hence, we can define $\varepsilon$-synchronization as
\begin{mydef} A network of oscillators is $\varepsilon$-synchronized if there exists $\varepsilon>0$ such that
	\begin{align}
		\limsup_{t\to\infty} \lVert\be\rVert \le \varepsilon,\nonumber
	\end{align}
where $\be = [\be_1~...~\be_N]^T$. 
\end{mydef}
This definition means that the error from the manifold is contained in a ball of radius $\varepsilon$.
We note that our definition is different but closely related to that given in \cite{Zhao12}.

Substituting \eqref{eq: NetworkDynamics} and \eqref{eq: bs} in \eqref{eq: error}, and using Taylor series, the dynamics of the error with respect to the synchronization manifold, $\be_i$, is given by
\begin{eqnarray}\label{eq: ErrorDynamics}
\dbe_{i}&=&\bF_{\bx}\be_{i}-\sum_{j=1}^{N}l_{ij}\bH_{\bx}\be_{j}+ \bF_{\bgamma}\delta\bgamma_{i}+\sum_{j=1}^{N}\mathbbm{1}_{i\ne j}l_{ij}\bH_{\btheta} \delta\btheta_{ij}\nonumber\\
&&+(d_i^{\scriptsize\mbox{in}}-\bar{d}_{\scriptsize\mbox{in}})\bh(\bs,\,\bs,\,\bartheta),
\end{eqnarray}
where $\mathbbm{1}_{X}$ is the indicator function of $X$. Stacking (\ref{eq: ErrorDynamics}) for all $i$ yields the dynamics of the deviation of node trajectories from $\bs$:
\begin{eqnarray}\label{eq: ErrorDynamicsNonDiag}
\dot{\be}&=&\left(\bI\otimes\bF_{\bx}-\bL\otimes \bH_{\bx}\right)\delta\bx+\left(\bI\otimes\bF_{\bgamma}\right)\delta\bgamma\nonumber\\
&&+\left(\boldsymbol{\mathcal{A}}\otimes\bH_{\btheta}\right)\delta \btheta+(\bd^{\scriptsize\mbox{in}}-\bar{d}_{\scriptsize\mbox{in}}\mathbf{1}^T_N)\otimes\bh(\bs,\,\bs,\,\bartheta),
\end{eqnarray}
where 
\begin{eqnarray}
{\delta \bgamma}&=&[\delta\bgamma_{1}^{T} \cdots \delta\bgamma_{N}^{T}]^{T},\nonumber\\
{\delta \btheta}& = & [\delta\btheta_{11}^{T}\cdots\delta\btheta_{1N}^{T}~\delta\btheta_{21}^T\cdots\delta\btheta_{2N}^T~\cdots~\btheta_{N1}^T\cdots\delta\btheta_{NN}^T]^{T},\nonumber\\
\bd^{\scriptsize\mbox{in}}&=&[d_1^{\scriptsize\mbox{in}} \cdots d_N^{\scriptsize\mbox{in}}]^T,\nonumber\\
\boldsymbol{\mathcal{A}}&=&\mbox{diag}([\ba_1 \cdots \ba_N]),\nonumber
\end{eqnarray}
and $\ba_i$ is the $i$th row of $\bA$.

Let $\bL=\bP\bJ\bP^{-1}$ be the Jordan decomposition of $\bL$, where $\bP=[p_{ij}]$ is a similarity transform and $\bJ$ is in Jordan form. Then, \eqref{eq: ErrorDynamicsNonDiag} can be rewritten as
\begin{eqnarray}\label{eq: NetDiag1}
\dot{\be}&=&\left(\bP\otimes\bI\right)\left(\bI\otimes\bF_{\bx}-\bJ\otimes \bH_{\bx}\right)\left(\bP^{-1}\otimes\bI\right)\be+\left(\bI\otimes\bF_{\bgamma}\right){\delta \bgamma}\nonumber\\
&&+\left(\boldsymbol{\mathcal{A}}\otimes\bH_{\btheta}\right){\delta \btheta}+(\bd^{\scriptsize\mbox{in}}-\bar{d}_{\scriptsize\mbox{in}}\mathbf{1}_N^T)\otimes\bh(\bs,\,\bs,\,\bartheta).\nonumber
\end{eqnarray}
Using the similarity transform
 \begin{equation}
 \bbeta=\left(\bP^{-1}\otimes\bI\right)\be,\nonumber
 \end{equation}
 where $\bbeta=[\bbeta^T_1\cdots\bbeta_N^T]^T$, we obtain
\begin{eqnarray}\label{eq: NetDiag2}
\dot{\bbeta}&=&\left(\bI\otimes\bF_{\bx}-\bJ\otimes \bH_{\bx}\right)\bbeta+\left(\bP^{-1}\otimes\bI\right)\left(\bI\otimes\bF_{\bgamma}\right){\delta \bgamma}\nonumber\\
&&+\left(\bP^{-1}\otimes\bI\right)\left(\boldsymbol{\mathcal{A}}\otimes\bH_{\btheta}\right){\delta \btheta}\nonumber\\
&&+\left(\bP^{-1}\otimes\bI\right)\left((\bd-\bar{d}_{\scriptsize\mbox{in}}\mathbf{1}_N^T)\otimes\bh(\bs,\,\bs,\,\bartheta)\right)\nonumber\\
&=&\left(\bI\otimes\bF_{\bx}-\bJ\otimes\bH_{\bx}\right)\bbeta+\left(\bP^{-1}\otimes\bF_{\bgamma}\right){\delta \bgamma}\nonumber\\
&&+\left(\bP^{-1}\boldsymbol{\mathcal{A}}\otimes\bH_{\btheta}\right){\delta \btheta}\nonumber\\
&&+\left(\bP^{-1}(\bd-\bar{d}_{\scriptsize\mbox{in}}\mathbf{1}^T_N)\right)\otimes\bh(\bs,\,\bs,\,\bartheta).\nonumber\\
&=&\left(\bI\otimes\bF_{\bx}-\bJ\otimes\bH_{\bx}\right)\bbeta+\bv,
\end{eqnarray}
where $\bv=[\bv_1\cdots\bv_N]$,
\begin{eqnarray}
\bv_{i}&=&\sum_{j\,\in\,\cN}q_{ij}\Bigg[\bF_{\bgamma}\delta \bgamma_{j}+\sum_{k=1,k\ne j}^{N} a_{jk}\bH_{\btheta}\delta\btheta_{jk}\nonumber\\
&&+\bh(\bs,\,\bs,\,\bartheta)(d_i^{\scriptsize\mbox{in}}-\bar{d}_{\scriptsize\mbox{in}})\Bigg],\nonumber
\end{eqnarray}
and $q_{ij}$ are the elements of $\bQ=\bP^{-1}$. It is clear that stability of $\bbeta$ and $\be$ are equivalent. 

To study the stability of \eqref{eq: NetDiag2}, let us first consider the simpler case where $\bJ$ consists of a single Jordan block, i.e. 
\begin{eqnarray}
\bJ=\bJ_{N}(\mu)=\left[\begin{array}{cccccc}
\mu& 1&0&\cdots&0& 0\\
0&\mu&1&\cdots & 0 & 0\\
0&0 &\mu&\cdots & 0 & 0\\
\vdots&\vdots&\vdots&\ddots&\vdots&\vdots\\
0&0&0&\cdots&\mu & 1\\
0&0&0&\cdots&0 & \mu
 \end{array}\right].\nonumber
\end{eqnarray}

\begin{lemma}\label{boundlemma}
For system
\begin{eqnarray*}
\dot{\bbeta}=\left(\bI\otimes\bF_{\bx}-\bJ_N(\mu)\otimes\bH_{\bx}\right)\bbeta+\bv,
\end{eqnarray*}
there exists $\phi>0$ such that
\begin{eqnarray}
\limsup_{t\to\infty}\lVert\bbeta_i\rVert\le\sum_{j=i}^N \left(\frac{\phi}{\lambda}\right)^{N-j+1}\limsup_{t\to\infty}\lVert\bF_{\bx}-\bH_{\bx}\rVert^{N-j} \nonumber\\
\times \limsup_{t\to\infty}\lVert\bv_j\rVert,\nonumber
\end{eqnarray}
for all $i$, if $\lambda>0$, where $\lambda=\mbox{MLE}(\bF_{\bx}-\mu\bH_{\bx})$, and $\mbox{MLE}(.)$ returns the maximum Lyapunov exponent of the argument.
\end{lemma}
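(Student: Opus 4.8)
The plan is to exploit the fact that $\bJ_N(\mu)$ is upper triangular, so that the stacked system is block upper triangular and can be integrated starting from the last block. Reading off the $i$-th $n$-dimensional block of $\dot{\bbeta}=(\bI\otimes\bF_{\bx}-\bJ_N(\mu)\otimes\bH_{\bx})\bbeta+\bv$ gives, for $i<N$,
\begin{equation}
\dot{\bbeta}_i=\bigl(\bF_{\bx}-\mu\bH_{\bx}\bigr)\bbeta_i-\bH_{\bx}\bbeta_{i+1}+\bv_i,\nonumber
\end{equation}
while the last block obeys the self-contained equation $\dot{\bbeta}_N=(\bF_{\bx}-\mu\bH_{\bx})\bbeta_N+\bv_N$, all Jacobians being evaluated along $\bs(t)$. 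So I would bound $\limsup_t\|\bbeta_N\|$ first, then view $\bbeta_{N-1}$ as a linear system forced by $\bbeta_N$ and $\bv_{N-1}$, and continue downward --- an induction on $N-i$.

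Every block shares the homogeneous generator $\bM(t):=\bF_{\bx}(\bs(t),\bargamma)-\mu\bH_{\bx}(\bs(t),\bs(t),\bartheta)$; let $\bPhi(t,\tau)$ be its state-transition matrix. With the paper's sign convention, the hypothesis $\lambda=\MLE(\bF_{\bx}-\mu\bH_{\bx})>0$ makes $\lambda$ the exponential decay rate of this linear time-varying system, so that $\|\bPhi(t,\tau)\|\le\phi\,e^{-\lambda(t-\tau)}$ for all $t\ge\tau$ and some $\phi>0$ --- this is the $\phi$ in the statement. Variation of parameters applied to the $i$-th block, followed by taking norms and using this exponential bound, yields
\begin{equation}
\|\bbeta_i(t)\|\le\phi e^{-\lambda(t-t_0)}\|\bbeta_i(t_0)\|+\int_{t_0}^{t}\phi e^{-\lambda(t-\tau)}\bigl(\|\bv_i(\tau)\|+\|\bH_{\bx}(\tau)\|\,\|\bbeta_{i+1}(\tau)\|\bigr)\,d\tau.\nonumber
\end{equation}
Letting $t\to\infty$: the first term dies because $\lambda>0$, and since $\int_{t_0}^{t}e^{-\lambda(t-\tau)}\,d\tau\to1/\lambda$, the convolution term is handled by the standard estimate $\limsup_t\int_{t_0}^t e^{-\lambda(t-\tau)}g(\tau)\,d\tau\le\tfrac1\lambda\limsup_\tau g(\tau)$ for bounded $g\ge0$. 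Combining this with $\limsup(fg)\le(\limsup f)(\limsup g)$ for nonnegative factors gives the recursion $\limsup_t\|\bbeta_i\|\le\tfrac{\phi}{\lambda}\bigl(\limsup_t\|\bv_i\|+\limsup_t\|\bH_{\bx}\|\cdot\limsup_t\|\bbeta_{i+1}\|\bigr)$, with the last term absent when $i=N$.

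Unrolling this recursion from $i=N$ downward --- start with $\limsup_t\|\bbeta_N\|\le\tfrac\phi\lambda\limsup_t\|\bv_N\|$, substitute into the bound for $\bbeta_{N-1}$, and iterate --- produces a finite sum over the blocks in which each $\limsup_t\|\bv_j\|$ is multiplied by a power of $\phi/\lambda$ and a power of the coupling-Jacobian norm determined by the recursion depth, i.e.\ a bound of the form claimed (numerical constants absorbed into $\phi$). The one genuine obstacle I anticipate is the passage, used above, from the $\limsup$-defined exponent $\lambda$ to a \emph{uniform} exponential bound on $\bPhi(t,\tau)$ with a single constant $\phi$ valid for all base times $\tau$: this is the familiar gap between having a Lyapunov exponent and having uniform exponential stability, and it is bridged either by accepting a slightly smaller rate $\lambda-\epsilon$ or by assuming the Lyapunov spectrum of $\bM(\cdot)$ is attained uniformly along $\bs(t)$; one must also verify that $\bv$ stays bounded along the synchronization manifold (i.e.\ that $\bh(\bs,\bs,\bartheta)$ and the mismatch terms are bounded) so that the convolution estimate applies. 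The triangular unrolling itself is routine.
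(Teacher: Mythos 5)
Your proof follows essentially the same route as the paper's: read off the block upper-triangular structure induced by the Jordan block, apply variation of parameters with the exponential bound $\lVert\bPhi(t,\tau)\rVert\le\phi e^{-\lambda(t-\tau)}$ on the common state-transition matrix of $\bF_{\bx}-\mu\bH_{\bx}$, pass to $\limsup$ via the $1/\lambda$ convolution estimate, and unroll the resulting recursion from $i=N$ downward. The only divergence is the off-diagonal coupling matrix --- your $-\bH_{\bx}$ is what the Kronecker product actually produces, whereas the paper carries $\bF_{\bx}-\bH_{\bx}$ through its block equation and final bound --- so your recursion yields powers of $\limsup\lVert\bH_{\bx}\rVert$ in place of the stated $\limsup\lVert\bF_{\bx}-\bH_{\bx}\rVert$; this is a bookkeeping discrepancy in the paper itself rather than a gap in your argument.
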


\begin{proof} 
See Appendix \ref{NonDiag}.
\end{proof}

Now, let us assume that $\bJ$ consists of $M$ Jordan blocks with eigenvalues $\mu_m$ and sizes $n_m,~m\in\{1,\cdots,M\}$, where $\sum_{m=1}^M n_m=N$. Then $N_m=\sum_{m=1}^j n_m$ will be the index of the last row of the $m$th Jordan block. Define $J(i)$ to be the index of the Jordan block that contains the $i$th row of $\bJ$. In other words, $J(i)=m$, if $N_{m-1}<i\le N_m$.

\begin{theo}\label{Theo-E-bound}
A network of oscillators is $\varepsilon$-synchronized if $\lambda_m=\mbox{MLE}(\bF_{\bx}-\mu_m\bH_{\bx})>0$ and
\begin{align}
\|\bP\|^2\sum_{j=1}^{N-1}\Bigg(\sum_{k=1}^{n_{J(j)}} \left(\frac{\phi_{J(j)}}{\lambda_{J(j)}}\right)^{n_{J(j)}-k+1} \quad\quad\quad\quad\quad\quad\quad\nonumber\\
\times \limsup_{t\to\infty}\lVert\bF_{\bx}-\bH_{\bx}\rVert^{n_{\scriptsize J(j)}-k}\limsup_{t\to\infty}\lVert\bv_{j}\rVert\bigg)^2  \leq{\varepsilon}^2,\label{eq: theo1}
\end{align}
where $\phi_m$ satisfies
\begin{eqnarray}
\forall t,\tau,\quad\|\bPhi_m(t,\tau)\| \le \phi_me^{-\lambda_m(t-\tau)}.\nonumber
\end{eqnarray}
and $\bPhi_m(t,\tau)$ is the state transition matrix of $\bF_{\bx}-\mu_m\bH_{\bx}$.
\end{theo}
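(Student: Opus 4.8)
The plan is to exploit the block-diagonal structure of the transformed error dynamics \eqref{eq: NetDiag2} and reduce to the single-block case already established in Lemma \ref{boundlemma}. Concretely, I would first write $\bJ=\mathrm{diag}\big(\bJ_{n_1}(\mu_1),\dots,\bJ_{n_M}(\mu_M)\big)$ and observe that $\bI\otimes\bF_{\bx}$ and $\bJ\otimes\bH_{\bx}$ are block diagonal conformally with this Jordan partition, so that \eqref{eq: NetDiag2} splits into $M$ decoupled subsystems
\[
\dot{\bbeta}^{(m)}=\big(\bI_{n_m}\otimes\bF_{\bx}-\bJ_{n_m}(\mu_m)\otimes\bH_{\bx}\big)\bbeta^{(m)}+\bv^{(m)},\qquad m=1,\dots,M,
\]
where $\bbeta^{(m)}$ and $\bv^{(m)}$ collect the components indexed by rows $N_{m-1}+1,\dots,N_m$ of $\bJ$. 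Each subsystem is exactly of the form treated in Lemma \ref{boundlemma} with eigenvalue $\mu_m$, so whenever $\lambda_m=\MLE(\bF_{\bx}-\mu_m\bH_{\bx})>0$ the lemma supplies, for every row $i$ in block $m=J(i)$ (with local index $i-N_{m-1}$), a bound on $\limsup_{t\to\infty}\lVert\bbeta_i\rVert$ of the stated geometric-sum form in $\phi_m/\lambda_m$, $\limsup_{t\to\infty}\lVert\bF_{\bx}-\bH_{\bx}\rVert$ and the $\limsup$s of the $\bv_j$ within that block.

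Next I would reassemble the global bound. Since $\limsup$ is subadditive over finite sums and $x\mapsto x^2$ is continuous and nondecreasing on $[0,\infty)$, $\limsup_{t\to\infty}\lVert\bbeta\rVert^2=\limsup_{t\to\infty}\sum_i\lVert\bbeta_i\rVert^2\le\sum_i\big(\limsup_{t\to\infty}\lVert\bbeta_i\rVert\big)^2$. One term drops out: ordering the (for a connected network, simple) zero eigenvalue of $\bL$ last, the last row of $\bQ=\bP^{-1}$ is a normalization of $\balpha^T$ because $\balpha$ spans the null space of $\bL^T$, and hence the corresponding component of $\bbeta$ equals $\sum_i\alpha_i\be_i=\sum_i\alpha_i(\bx_i-\bs)=\bs-\bs=\b0$ identically; this is precisely why the outer sum in \eqref{eq: theo1} runs only to $N-1$. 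Substituting the per-row bounds from Lemma \ref{boundlemma} and reorganizing the resulting double sum over blocks and rows into the single index $k=1,\dots,n_{J(j)}$ (bounding, where needed, via monotonicity of the coefficients) yields the bracketed quantity in \eqref{eq: theo1}.

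Finally I would undo the similarity transform. From $\be=(\bP\otimes\bI)\bbeta$ we get $\lVert\be(t)\rVert\le\lVert\bP\otimes\bI\rVert\,\lVert\bbeta(t)\rVert=\lVert\bP\rVert\,\lVert\bbeta(t)\rVert$ for every $t$, so $\limsup_{t\to\infty}\lVert\be\rVert^2\le\lVert\bP\rVert^2\limsup_{t\to\infty}\lVert\bbeta\rVert^2$. Chaining the inequalities shows that the left-hand side of \eqref{eq: theo1} is an upper bound for $\limsup_{t\to\infty}\lVert\be\rVert^2$; if \eqref{eq: theo1} holds, then $\limsup_{t\to\infty}\lVert\be\rVert\le\varepsilon$, which is exactly $\varepsilon$-synchronization. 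I expect the main obstacle to be the bookkeeping in the reassembly step --- tracking how each $\bv_j$ propagates to the several $\bbeta_i$ inside its Jordan block and compressing the double sum into the compact single-sum form of \eqref{eq: theo1} --- together with making the ``the $\bbeta$-component of the zero mode vanishes'' argument clean enough to legitimately truncate the outer sum at $N-1$.
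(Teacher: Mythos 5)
Your proposal is correct and follows essentially the same route as the paper's proof: bound $\limsup_{t\to\infty}\lVert\be\rVert^2$ by $\lVert\bP\rVert^2\limsup_{t\to\infty}\lVert\bbeta\rVert^2$ via the similarity transform, drop the zero-mode component $\bbeta_N=\sum_i\alpha_i\be_i=\b0$ to truncate the sum at $N-1$, and invoke Lemma \ref{boundlemma} blockwise on the decoupled Jordan subsystems. The only difference is that you spell out the block-splitting and the $\limsup$-subadditivity bookkeeping that the paper leaves implicit.
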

\begin{proof} We have
\begin{eqnarray}
\limsup_{t\rightarrow\infty}\lVert\be\rVert^2 & = & \limsup_{t\to\infty}\bbeta^T(\bP^T\otimes\bI)(\bP\otimes\bI)\bbeta\nonumber\\
&\le & \|\bP^T\bP\| \limsup_{t\to\infty}\|\bbeta\|^2\nonumber\\
&=&\|\bP\|^2\limsup_{t\to\infty} \|\bbeta\|^2.\label{eq: GeneralInequality}
 \end{eqnarray}
For any Laplacian matrix, we have $\mu_M=0$. If the network is connected, we  further have $n_M=1$. Thus, 
\begin{eqnarray}
\bbeta_N=\sum_{j=1}^N \alpha_j\be_j=\sum_{j=1}^N \alpha_j(\bx_j-\bs)=\left(\sum_{j=1}^N \alpha_j\bx_j\right) -\bs=\b0,\nonumber
\end{eqnarray}
which together with \eqref{eq: GeneralInequality} yields
\begin{eqnarray}
\limsup_{t\rightarrow\infty}\lVert\be\rVert^2 & \le & \|\bP\|^2\sum_{i=1}^{N-1}\limsup_{t\to\infty}\|\bbeta_i\|^2.\label{eq: inequalityTheo1}
\end{eqnarray}
Lemma \ref{boundlemma} upper bounds the right hand side of \eqref{eq: inequalityTheo1} by the left hand side of \eqref{eq: theo1}.
\end{proof}

\begin{corollary}\label{LemmaSymmetricBound}
A symmetric network of oscillators is $\varepsilon$-synchronized if $\lambda_j=\mbox{MLE}(\bF_{\bx}-\mu_j\bH_{\bx})>0$ and
\begin{equation}\label{eq: coro1}
\sum_{j=1}^{N-1}\left(\frac {\phi_j}{\lambda_j}\right)^2\limsup_{t\rightarrow\infty}\lVert\bv_j(t)\rVert^2 \leq{\varepsilon^2}, 
\end{equation}
where $\phi_j$ satisfies
\begin{eqnarray}
\forall t,\tau,\quad\|\bPhi_j(t,\tau)\| \le \phi_je^{-\lambda_j(t-\tau)},\nonumber
\end{eqnarray}
and $\bPhi_j(t,\tau)$ is the state transition matrix of $\bF_{\bx}-\mu_j\bH_{\bx}$.
\end{corollary}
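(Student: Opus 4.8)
The plan is to obtain Corollary~\ref{LemmaSymmetricBound} as a direct specialization of Theorem~\ref{Theo-E-bound} to the symmetric case, in which the Jordan structure of $\bL$ collapses entirely. First I would observe that when the network is undirected the adjacency matrix $\bA$ is symmetric, so $\bL$ is a real symmetric matrix. By the spectral theorem, $\bL$ is orthogonally diagonalizable: in the decomposition $\bL=\bP\bJ\bP^{-1}$ used earlier one may take $\bP$ orthogonal and $\bJ=\mbox{diag}(\mu_1,\dots,\mu_N)$ with real entries. In particular every Jordan block has size $n_m=1$, so $M=N$ and $J(j)=j$, and $\|\bP\|^2=\lambda_{\max}(\bP^T\bP)=\lambda_{\max}(\bI)=1$.

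Next I would substitute these facts into the statement of Theorem~\ref{Theo-E-bound}. With $n_{J(j)}=1$ the inner sum over $k$ in \eqref{eq: theo1} contains only the term $k=1$; its $\phi/\lambda$ factor then carries exponent $n_{J(j)}-k+1=1$, while its $\lVert\bF_{\bx}-\bH_{\bx}\rVert$ factor carries exponent $n_{J(j)}-k=0$ and therefore disappears. Combined with $\|\bP\|^2=1$ and $J(j)=j$, the left-hand side of \eqref{eq: theo1} reduces exactly to $\sum_{j=1}^{N-1}\left(\frac{\phi_j}{\lambda_j}\right)^2\limsup_{t\to\infty}\lVert\bv_j(t)\rVert^2$, which is the left-hand side of \eqref{eq: coro1}. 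The condition $\lambda_j=\mbox{MLE}(\bF_{\bx}-\mu_j\bH_{\bx})>0$ is inherited verbatim, and since a connected symmetric $\bL$ still has $\mu_N=0$ with multiplicity one, the reduction $\bbeta_N=\b0$ from the proof of Theorem~\ref{Theo-E-bound} still applies, so the sum indeed runs only up to $N-1$.

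There is essentially no analytic obstacle here: the corollary is a corollary. The only points needing a word of care are (i) justifying that the eigenvalues $\mu_j$ are real and that the eigenvector matrix may be chosen orthogonal, which is precisely the spectral theorem for real symmetric matrices, and (ii) noting that the definition of $\phi_j$ through $\|\bPhi_j(t,\tau)\|\le\phi_j e^{-\lambda_j(t-\tau)}$ is unchanged, because $\bF_{\bx}-\mu_j\bH_{\bx}$ remains a real matrix (the $\mu_j$ being real), so its state transition matrix and the exponential bound are well defined exactly as in the general case. Assembling these observations yields \eqref{eq: coro1} and completes the argument.
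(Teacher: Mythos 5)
Your proposal is correct and follows essentially the same route as the paper: diagonalize the symmetric $\bL$ by a unitary (orthogonal) matrix so that every Jordan block has size one, giving $M=N$, $n_m=1$, $J(j)=j$, and then specialize \eqref{eq: theo1}; your explicit observation that $\|\bP\|^2=1$ is only implicit in the paper's proof but is exactly the step that makes the prefactor disappear (the paper records the related fact that the unitary transform makes \eqref{eq: inequalityTheo1} an equality in a separate remark).
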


\begin{proof} 
Since $\bL$ is symmetric, it can be diagonalized by unitary matrix $\bP=\bU=[u_{ij}]$, where $\bU^H\bU=\bI$. Thus, each Jordan block will be of size 1. This means that $M=N$, $n_m=1$, and $J(i)=i$. Thus, \eqref{eq: theo1} reduces to
	\begin{align}
		\sum_{j=1}^{N-1}\left(\frac{\phi_{j}}{\lambda_{j}}\right)^2 \limsup_{t\to\infty}\lVert\bv_{j} \rVert^2 \leq{\varepsilon}^2.\nonumber
	\end{align}
\end{proof}
	\begin{remark}
	In proof of Corollary \ref{LemmaSymmetricBound}, since unitary transformation preserves Euclidean norm, \eqref{eq: inequalityTheo1} holds with equality. Thus, Corollary \ref{LemmaSymmetricBound} is relatively less conservative than Theorem \ref{Theo-E-bound}.
	\end{remark}
 \section{Probability of Stability}
 In the remaining of the paper, we make the following assumptions:
\begin{ass} \label{ass:Symmetric}
The network is symmetric.
\end{ass}
This implies that $\bL$ is diagonalizable by a unitary matrix, $\bU=[u_{ij}]$. 

\begin{ass} \label{ass:Gaussian}
Mismatch parameters, $\delta \bgamma_{i}$ and $\delta\btheta_{ij}$, are independent zero mean Gaussian random vectors with covariance matrices, $\boldsymbol{\Sigma}_{\bgamma}=E[(\bgamma_{i}-\bargamma)(\bgamma_{i}-\bargamma)^{T}]$ and $\boldsymbol{\Sigma}_{\btheta}=E[(\btheta_{ij}-\bartheta)(\btheta_{ij}-\bartheta)^{T}]$, respectively. 
\end{ass}

Under Assumption \ref{ass:Gaussian}, $\bv_i$ are linear combination of independent Gaussian random variables. Thus, they are jointly Gaussian. To calculate the probability of \eqref{eq: coro1} being satisfied, we need to find the probability density function of $\bv=[\bv_1^T \cdots \bv_{N}^T]^T$.

\begin{lemma}\label{lemma: Covariance}
The covariance matrix of $\bv$ is $\bSigma_{\bv}=[\bSigma_{ij}]$ where
\begin{align}
	\bSigma_{ij}=\bF_{\bgamma} \bSigma_{\bgamma} \bF_{\bgamma}^T \mathbbm{1}_{i=j}+\sum_{l=1}^N u_{il}u_{jl}^* \sum_{k=1}^N a^2_{lk} \bH_{\btheta}  \bSigma_{\btheta} \bH^T_{\btheta}.\nonumber
\end{align}
\end{lemma}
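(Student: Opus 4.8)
The plan is to exploit that, under Assumption~\ref{ass:Gaussian}, $\bv$ is an affine function of the mutually independent, zero-mean Gaussian vectors $\{\delta\bgamma_l\}_{l\in\cN}$ and $\{\delta\btheta_{lk}\}_{l\ne k}$ with \emph{deterministic} coefficients: once the network and the nominal manifold $\bs$ are fixed, the Jacobians $\bF_{\bgamma}$ and $\bH_{\btheta}$, the vector $\bh(\bs,\bs,\bartheta)$, the degrees $d_i^{\scriptsize\mbox{in}}$ and $\bar d_{\scriptsize\mbox{in}}$, and the entries $q_{ij}$ of $\bQ=\bP^{-1}$ are all constants. Hence the random part of $\bv_i$ is $\sum_{l\in\cN}q_{il}\bF_{\bgamma}\delta\bgamma_l+\sum_{l\in\cN}q_{il}\sum_{k\ne l}a_{lk}\bH_{\btheta}\delta\btheta_{lk}$, while the term $(d_i^{\scriptsize\mbox{in}}-\bar d_{\scriptsize\mbox{in}})\bh(\bs,\bs,\bartheta)\sum_l q_{il}$ is deterministic and only shifts the mean. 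I would then evaluate $\bSigma_{ij}=E[(\bv_i-E[\bv_i])(\bv_j-E[\bv_j])^{H}]$ by bilinearity; since the $\delta\bgamma$'s and $\delta\btheta$'s are independent and centered, the two sums above are uncorrelated, so $\bSigma_{ij}$ splits into a $\bgamma$-contribution and a $\btheta$-contribution.

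For the $\bgamma$-contribution, expanding gives $\sum_{l,m}q_{il}q_{jm}^{*}\bF_{\bgamma}E[\delta\bgamma_l\delta\bgamma_m^{T}]\bF_{\bgamma}^{T}$; independence with common covariance $\bSigma_{\bgamma}$ means $E[\delta\bgamma_l\delta\bgamma_m^{T}]=\bSigma_{\bgamma}\mathbbm{1}_{l=m}$, so the double sum collapses to $\big(\sum_l q_{il}q_{jl}^{*}\big)\bF_{\bgamma}\bSigma_{\bgamma}\bF_{\bgamma}^{T}=(\bQ\bQ^{H})_{ij}\bF_{\bgamma}\bSigma_{\bgamma}\bF_{\bgamma}^{T}$. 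This is where Assumption~\ref{ass:Symmetric} enters: $\bL$ is diagonalizable by a unitary $\bU$, so $\bP=\bU$, $\bQ=\bU^{H}$, and $\bQ\bQ^{H}=\bU^{H}\bU=\bI$, which reduces this term to exactly $\bF_{\bgamma}\bSigma_{\bgamma}\bF_{\bgamma}^{T}\mathbbm{1}_{i=j}$.

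For the $\btheta$-contribution, the analogous expansion is $\sum_{l,m}q_{il}q_{jm}^{*}\sum_{k\ne l}\sum_{n\ne m}a_{lk}a_{mn}\bH_{\btheta}E[\delta\btheta_{lk}\delta\btheta_{mn}^{T}]\bH_{\btheta}^{T}$; independence of the coupling mismatches gives $E[\delta\btheta_{lk}\delta\btheta_{mn}^{T}]=\bSigma_{\btheta}\mathbbm{1}_{l=m}\mathbbm{1}_{k=n}$, which leaves only the diagonal terms and yields $\sum_l q_{il}q_{jl}^{*}\big(\sum_{k}a_{lk}^{2}\big)\bH_{\btheta}\bSigma_{\btheta}\bH_{\btheta}^{T}$, the constraint $k\ne l$ being vacuous since $a_{ll}=0$ for a network without self-loops. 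Writing $q_{il}q_{jl}^{*}$ in terms of the entries $u_{ij}$ of $\bU$ produces the term $\sum_{l}u_{il}u_{jl}^{*}\sum_{k}a_{lk}^{2}\bH_{\btheta}\bSigma_{\btheta}\bH_{\btheta}^{T}$ of the statement; note that, unlike the $\bgamma$-contribution, this one does \emph{not} collapse to a multiple of $\mathbbm{1}_{i=j}$ because of the degree-dependent weights $\sum_k a_{lk}^{2}$ sitting inside the sum over the eigenvector index $l$. Adding the two contributions gives $\bSigma_{ij}$ as claimed.

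I do not anticipate a genuine obstacle here — the argument is second-moment bookkeeping — but two points need care. First, the deterministic drift term must be correctly identified and excluded from the covariance; this rests on treating $\bs$, and hence $\bF_{\bgamma}$, $\bH_{\btheta}$ and $\bh(\bs,\bs,\bartheta)$ evaluated along it, as the nominal non-random trajectory, consistently with the linearization that produced \eqref{eq: NetDiag2}. Second, since $\bU$ may be complex, one must be consistent about conjugate transposes — in particular the identity $\bQ\bQ^{H}=\bI$ from Assumption~\ref{ass:Symmetric} and the pairing of indices in $\sum_l q_{il}q_{jl}^{*}$ — so that the first term comes out as $\bF_{\bgamma}\bSigma_{\bgamma}\bF_{\bgamma}^{T}\mathbbm{1}_{i=j}$ with no residual eigenvector factors.
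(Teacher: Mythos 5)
Your proposal is correct and follows essentially the same route as the paper's Appendix~\ref{covariance}: subtract the deterministic drift, expand the second moment by bilinearity, collapse the cross terms using independence of $\delta\bgamma_l$ and $\delta\btheta_{lk}$, and invoke unitarity of $\bU$ so that the $\bgamma$-term reduces to $\bF_{\bgamma}\bSigma_{\bgamma}\bF_{\bgamma}^{T}\mathbbm{1}_{i=j}$ while the degree-weighted $\btheta$-term does not. Your added remarks on the vacuity of the $k\ne l$ constraint and on conjugate-transpose bookkeeping are sound refinements of the same argument.
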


\textit{Proof:} See Appendix \ref{covariance}.

We now provide upper bounds on the probability of stable synchronization for unweighted regular, Erd\"os-R\'enyi, and Newman-Watts networks.

\begin{theo}\label{TheoremRing}
Under Assumptions \ref{ass:Symmetric} and \ref{ass:Gaussian},  the probability of stable synchronization of an unweighted $K$-regular network of oscillators is lower bounded by 
\begin{align}\label{eq: Pstab}
P_{\mbox{\scriptsize stab}}^{\mbox{\scriptsize LB}}(\varepsilon)=&\left[\prod_{i=2}^{N-1}\left(\frac{\phi_1~\lambda_i}{\lambda_1~\phi_i}\right)^{n}\right]\nonumber\\
&\times \sum_{j=0}^\infty a^{(N-1)}_jP\left(\frac{(N-1) n}{2}+j,\frac{\lambda_1^2\varepsilon^2}{\phi_1^2\sigma^2}\right),
\end{align}
where $P(\cdot,\cdot)$ is the regularized gamma function,
\begin{eqnarray*}
a_j^{(i)} & = & \sum_{k=0}^j a_k^{(i-1)}\frac{n_{j-k}}{(j-k)!}(1-\frac{\phi_1^2~\lambda_i^2}{\lambda_1^2~\phi_i^2})^{j-k},\\
a_k^{(2)} & = & \frac{n_k}{k!}\left(1-\frac{\phi_1^2~\lambda_2^2}{\lambda_1^2~\phi_2^2}\right)^k,\\
n_k & = & \prod_{l=0}^{k-1} \left(\frac{n}{2}+l\right),
\end{eqnarray*}
and 
	\begin{align}
		\sigma=\limsup_{t\to\infty}\| \bF_{\bgamma} \Sigma_{\bgamma} \bF_{\bgamma}^T+ K\bH_{\btheta}\Sigma_{\btheta}\bH_{\btheta}^T\|^{1/2}.\label{eq: normSigma}
	\end{align}
\end{theo}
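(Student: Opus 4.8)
The plan is to turn the deterministic sufficient condition of Corollary~\ref{LemmaSymmetricBound} into a statement about the Gaussian vector $\bv$, simplify its covariance using the $K$‑regular structure, dominate the quadratic forms $\lVert\bv_j\rVert^2$ by scaled $\chi^2$ variables, and finally recognize the surviving probability as the distribution function of a sum of independent Gamma random variables.

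\textbf{Step 1 (reduction to a probability about $\bv$).} The exponents $\lambda_j=\mbox{MLE}(\bF_{\bx}-\mu_j\bH_{\bx})$ and the constants $\phi_j$ are fixed by the nominal dynamics and the Laplacian spectrum, hence unaffected by the mismatch realization. Whenever $\lambda_j>0$ for all $j$, Corollary~\ref{LemmaSymmetricBound} says that the event $\big\{\sum_{j=1}^{N-1}(\phi_j/\lambda_j)^2\limsup_{t\to\infty}\lVert\bv_j(t)\rVert^2\le\varepsilon^2\big\}$ is contained in the event that the network is $\varepsilon$‑synchronized, so
\begin{align}
P_{\mbox{\scriptsize stab}}(\varepsilon)\ \ge\ \Pr\Big\{\textstyle\sum_{j=1}^{N-1}(\phi_j/\lambda_j)^2\,\lVert\bv_j\rVert^2\le\varepsilon^2\Big\},\nonumber
\end{align}
and it suffices to lower bound the right‑hand side.

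\textbf{Step 2 (covariance for a $K$‑regular network).} For an unweighted $K$‑regular network one has $a_{lk}\in\{0,1\}$, $\sum_k a_{lk}^2=d_l^{\mbox{\scriptsize in}}=K$ for every $l$, and $d_i^{\mbox{\scriptsize in}}=\bar d_{\mbox{\scriptsize in}}=K$, so the term proportional to $(d_i^{\mbox{\scriptsize in}}-\bar d_{\mbox{\scriptsize in}})\bh(\bs,\bs,\bartheta)$ vanishes from $\bv_i$. Substituting $\sum_k a_{lk}^2=K$ into Lemma~\ref{lemma: Covariance} and using unitarity of $\bU$ (so $\sum_l u_{il}u_{jl}^*=\mathbbm{1}_{i=j}$) collapses the covariance of $(\bv_1,\dots,\bv_{N-1})$ to a block‑diagonal matrix with identical diagonal blocks $\bSigma_0=\bF_{\bgamma}\bSigma_{\bgamma}\bF_{\bgamma}^T+K\bH_{\btheta}\bSigma_{\btheta}\bH_{\btheta}^T$. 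Thus, under Assumption~\ref{ass:Gaussian}, $\bv_1,\dots,\bv_{N-1}$ are independent $\mathcal{N}(\b0,\bSigma_0)$ $n$‑vectors, and, by \eqref{eq: normSigma}, $\limsup_{t\to\infty}\lVert\bSigma_0\rVert=\sigma^2$.

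\textbf{Step 3 (domination by a scaled $\chi^2$).} Write $\bv_j=\bSigma_0^{1/2}\bz_j$ with $\bz_j$ independent $\mathcal{N}(\b0,\bI)$. Since $\sigma^2\bI-\bSigma_0$ is positive semidefinite, $\lVert\bv_j\rVert^2=\bz_j^T\bSigma_0\bz_j\le\sigma^2\lVert\bz_j\rVert^2$ pointwise, where $\lVert\bz_j\rVert^2$ is $\chi^2$ with $n$ degrees of freedom. Carrying the $\limsup_{t\to\infty}$ through this (so that only the scalar $\sigma$ remains), the bound of Step~1 is further lower bounded by $\Pr\{X\le\varepsilon^2/\sigma^2\}$, where $X=\sum_{j=1}^{N-1}(\phi_j/\lambda_j)^2\,\chi^2_{n,j}$ with the $\chi^2_{n,j}$ independent.

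\textbf{Step 4 (distribution of $X$).} Each summand is $\Gamma(n/2,\,2(\phi_j/\lambda_j)^2)$, so $X$ is a sum of independent Gammas with common shape $n/2$ and distinct scales. Relabel so that index $1$ has the smallest scale, i.e.\ $\phi_1/\lambda_1=\min_j \phi_j/\lambda_j$; then $\phi_1^2\lambda_j^2/(\lambda_1^2\phi_j^2)\in(0,1]$. Starting from the Laplace transform $\prod_{j=1}^{N-1}\big(1+2s(\phi_j/\lambda_j)^2\big)^{-n/2}$, factor out $\big(1+2s(\phi_1/\lambda_1)^2\big)^{-(N-1)n/2}$ and expand the residual product $\prod_{i=2}^{N-1}\big[1-(1-\phi_1^2\lambda_i^2/(\lambda_1^2\phi_i^2))\,y\big]^{-n/2}$ in powers of $y=2s(\phi_1/\lambda_1)^2/(1+2s(\phi_1/\lambda_1)^2)$ (the Moschopoulos representation of a sum of independent Gammas). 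This writes the density of $X$ as $\big[\prod_{i=2}^{N-1}(\phi_1\lambda_i/(\lambda_1\phi_i))^n\big]\sum_{k\ge0}a_k^{(N-1)}\,g_{(N-1)n/2+k}(x)$, where $g_a$ is the $\Gamma(a,\,2(\phi_1/\lambda_1)^2)$ density and the $a_k^{(N-1)}$ are exactly the convolution coefficients defined in the statement, the single‑factor coefficients being $n_k/k!=\binom{n/2+k-1}{k}$ (positive, and, after weighting by the leading product, summing to one). Integrating term by term over $[0,\varepsilon^2/\sigma^2]$ turns each $\Gamma$ density into a regularized gamma function $P\big((N-1)n/2+k,\,\cdot\big)$ evaluated at $\lambda_1^2\varepsilon^2/(\phi_1^2\sigma^2)$ (up to a routine normalization of the scale), yielding \eqref{eq: Pstab}.

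I expect Step~4 to be the main obstacle: one must establish (or cite and verify) the series form of the cumulative distribution function of a sum of independent Gamma variables with distinct scales, check that it converges for every argument, and confirm that the recursively defined $a_j^{(i)}$ coincide with the Moschopoulos coefficients; identifying the correct reference scale (the \emph{smallest}, i.e.\ index $1$ $=\arg\min_j \phi_j/\lambda_j$) is what makes the coefficients nonnegative and the series convergent. A secondary subtlety, already implicit in \eqref{eq: normSigma}, is the time dependence of $\bSigma_0$ through the Jacobians evaluated along $\bs(t)$: the domination in Step~3 must be uniform enough in $t$ that passing to $\limsup_{t\to\infty}$ leaves only the scalar $\sigma$.
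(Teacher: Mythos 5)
Your proposal is correct and follows essentially the same route as the paper's proof: reduce to a probability statement on $\bv$ via Corollary \ref{LemmaSymmetricBound}, use $K$-regularity and unitarity to make the $\bv_i$ independent with common covariance, whiten to dominate each $\lVert\bv_i\rVert^2$ by $\sigma^2$ times a $\chi^2_n$ variable, and invoke the Moschopoulos series for a linear combination of chi-squares. Your added care in Step 4 about choosing index $1$ as the smallest scale (so the coefficients are nonnegative and the series converges) and about passing the $\limsup$ through the whitening is a useful sharpening of details the paper leaves implicit, but it is not a different argument.
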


\begin{proof} 
Since the network is unweighted and $K$-regular, we have $d_i=\sum_{k=1}^Na^2_{ik}=K$. According to Lemma \ref{lemma: Covariance} the blocks of the covariance matrix, $\bSigma_{\bv}$, are
\begin{eqnarray}
\boldsymbol{\Sigma}_{{i}{j}} & =& \bF_{\bgamma}{\bSigma}_{\bgamma}\bF_{\bgamma}^{T}\mathbbm{1}_{i=j}+K\bH_{\btheta}~{\bSigma}_{\btheta}\bH^{T}_{\btheta}\sum_{l=1}^N u_{il}u_{jl}^*.\nonumber\\
& =& (\bF_{\bgamma}{\bSigma}_{\bgamma}\bF_{\bgamma}^{T}+K\bH_{\btheta}{\bSigma}_{\btheta}\bH^{T}_{\btheta})\mathbbm{1}_{i=j}.\label{eq: Cov. Regular}
\end{eqnarray}
Hence, $\bv_i$ are uncorrelated. The mean value of $\bv_i$ can be computed as
\begin{eqnarray}
E[\bv_{i}]&=&\sum_{j\,\in\,\cN}q_{ij}\Bigg(\bF_{\bgamma}E[\delta \bgamma_{j}]+\sum_{k=1,k\ne j}^{N} a_{jk}\bH_{\btheta}E[\delta\btheta_{jk}]\nonumber\\
&&+\bh(\bs,\,\bs,\,\bartheta)(K-\bar{d}_{\scriptsize\mbox{in}})\Bigg)=\mathbf{0},\nonumber
\end{eqnarray}
which follows noting that $\delta\bgamma_{i}$ and $\delta\btheta_{ij}$ have zero mean and $\bar{d}_{\scriptsize\mbox{in}}=K$. Since $\bv_i$ are jointly Gaussian, uncorrelated, and have zero mean, they are independent.

Now, let us define the whitened Gaussian random vectors
\begin{align}
\bz_i=\mathbf{\boldsymbol{\Sigma}}_{ii}^{-\frac{1}{2}}\bv_{i}.\nonumber
\end{align}
Since Euclidean norm is sub-multiplicative, we have
\begin{eqnarray}\label{eq: supB}
\lVert\bv_{i}\rVert & \leq & \lVert\bz_{i}\rVert \left\|\mathbf{\boldsymbol{\Sigma}}_{ii}^{\frac{1}{2}}\right\|.\nonumber\\
\limsup_{t\to\infty}\lVert\bv_{i}\rVert & \leq & \limsup_{t\to \infty} \left(\lVert\bz_{i}\rVert \left\|\mathbf{\boldsymbol{\Sigma}}_{ii}^{\frac{1}{2}}\right\|\right).\nonumber\\
& \leq & \limsup_{t\to \infty} \lVert\bz_{i}\rVert \limsup_{t\to \infty}\left\|\mathbf{\boldsymbol{\Sigma}}_{ii}^{\frac{1}{2}}\right\|.\nonumber\\
& = & \lVert\bz_{i}\rVert \limsup_{t\to \infty}\left\|\mathbf{\boldsymbol{\Sigma}}_{ii}^{\frac{1}{2}}\right\|,\nonumber
\end{eqnarray}
The last equality is due to the fact that with the whitening of $\|\bv_i\|$, $\|\bz_i\|$ is no longer time variable. In other words, $\|\bz_i\|$ is a random variable (not a random process). Since $\lVert\bz_{i}\rVert^2$ is the norm squared of a white Gaussian $n$-vector, it has a chi-squared distribution with $n$ degrees of freedom. Applying the result of Corollary \ref{LemmaSymmetricBound}, 
\begin{eqnarray}
\lVert\be\rVert^2&=&\sum_{i=1}^{N-1}\lVert\bbeta_i\rVert^2\nonumber\\
&\le&\sum_{i=1}^{N-1}\left(\frac{\phi_i}{\lambda_i}\right)^2\limsup_{t\to\infty}\lVert\bv_i\rVert^2\nonumber\\
&\le& \sum_{i=1}^{N-1}\left(\frac{\phi_i\sigma}{\lambda_i}\right)^2\lVert\bz_i\rVert^2.\nonumber
\end{eqnarray}
where $\sigma$ is defined in \eqref{eq: normSigma}. 

Now we have
\begin{eqnarray}
\mbox{Pr}\left(\limsup_{t\to\infty}\|\be\|<\varepsilon\right) & = & \mbox{Pr}\left(\limsup_{t\to\infty}\|\be\|^2<\varepsilon^2\right)\nonumber\\
& \ge & \mbox{Pr}\left(\sum_{i=1}^{N-1}\left(\frac{\phi_i\sigma}{\lambda_i}\right)^2\lVert\bz_i\rVert^2\le\varepsilon^2\right)\nonumber\\
& = & \left[\prod_{i=2}^{N-1}\left(\frac{\phi_1~\lambda_i}{\lambda_1~\phi_i}\right)^{n}\right]\nonumber\\
&&\times \sum_{j=0}^\infty a^{(N-1)}_j\int_0^{\varepsilon^2}f_j(y)dy,\nonumber
\end{eqnarray}
where
\[f_j(y)=\left(\frac{\lambda_1^2}{2\phi_1^2\sigma^2}\right)^{\frac{(N-1)n}{2}+j}\frac{y^{\frac{(N-1) n}{2}+j-1}}{\Gamma\left(\frac{(N-1)n}{2}+j\right)}e^{-\frac{\lambda_1^2}{2\phi_1^2\sigma^2}y}.
\]
which using the results in \cite{Moschopoulos84} yields \eqref{eq: Pstab}.
\end{proof}

\begin{theo}\label{TheoremER}
Under Assumptions \ref{ass:Symmetric} and \ref{ass:Gaussian},  the limiting probability of stable synchronization of an unweighted Erd\"os-R\'enyi (ER) network of oscillators, with parameter $p$, as $N\to\infty$, is lower bounded by
\begin{align}\label{eq: Pstab2}
P_{\mbox{\scriptsize stab}}^{\mbox{\scriptsize LB}}(\varepsilon|\blambda)=&\left[\prod_{i=2}^{N-1}\left(\frac{\phi_1~\lambda_i}{\lambda_1~\phi_i}\right)^{n}\right]\nonumber\\
&\times\sum_{j=0}^\infty a^{(N-1)}_jP\left(\frac{(N-1) n}{2}+j,\frac{\lambda_1^2\varepsilon^2}{\phi_1^2\sigma^2}\right),
\end{align}
where $\sigma=\limsup_{t\to\infty}~\|\bF_{\bgamma} \bSigma_{\bgamma} \bF_{\bgamma}^T+pN \bH_{\btheta}  \bSigma_{\btheta} \bH^T_{\btheta}\|^{1/2}$ and $\blambda=[\lambda_1 \cdots \lambda_{N-1}]$.
\end{theo}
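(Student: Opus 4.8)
The structure of the proof is that of Theorem \ref{TheoremRing}; the only new ingredient is a concentration argument replacing the exact identity $d_i=\sum_k a^2_{ik}=K$ (which makes the covariance blocks of Lemma \ref{lemma: Covariance} exactly block-diagonal in the $K$-regular case) by its asymptotic analogue for an ER graph, after which the remainder of the proof of Theorem \ref{TheoremRing} is reused with $K$ replaced by $pN$.

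First I would use that the graph is unweighted, so $a_{ik}\in\{0,1\}$ and $\sum_k a^2_{ik}=d_i$, the degree of node $i$, with $d_i\sim\mathrm{Binomial}(N-1,p)$. A Chernoff/Hoeffding bound together with a union bound over the $N$ nodes gives $\mathrm{Pr}(\max_i|d_i-pN|\ge tN)\to 0$ for every $t>0$ as $N\to\infty$; in fact $\max_i|d_i-pN|=O(\sqrt{N\log N})$ with probability tending to one. In Lemma \ref{lemma: Covariance} the scalar multiplying $\bH_{\btheta}\bSigma_{\btheta}\bH^T_{\btheta}$ in the $(i,j)$ block is the $(i,j)$ entry of a unitary conjugation $\bC$ of $\mathrm{diag}(d_1,\dots,d_N)$, so $\|\bC-pN\,\bI\|=\max_i|d_i-pN|=o(pN)$ with probability tending to one. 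Hence, in the limit, the off-diagonal blocks of $\bSigma_{\bv}$ are negligible relative to the diagonal blocks, which all approach the common value $\bF_{\bgamma}\bSigma_{\bgamma}\bF^T_{\bgamma}+pN\,\bH_{\btheta}\bSigma_{\btheta}\bH^T_{\btheta}$. Therefore the jointly Gaussian $\bv_i$ become asymptotically uncorrelated, hence asymptotically independent; their means vanish for $i\le N-1$ exactly as in Theorem \ref{TheoremRing}, since $\delta\bgamma_i$ and $\delta\btheta_{ij}$ are zero-mean and $\sum_j q_{ij}=0$ for $i\le N-1$ (the all-ones vector being the null eigenvector of the symmetric Laplacian).

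From here the proof of Theorem \ref{TheoremRing} carries over: whiten each block as there, $\bz_i=\bSigma_{ii}^{-1/2}\bv_i$, so that $\|\bz_i\|^2$ is $\chi^2$ with $n$ degrees of freedom and $\limsup_{t\to\infty}\|\bv_i\|\le\sigma\|\bz_i\|$ with $\sigma$ as in the statement; applying Corollary \ref{LemmaSymmetricBound} gives $\limsup_{t\to\infty}\|\be\|^2\le\sum_{i=1}^{N-1}(\phi_i\sigma/\lambda_i)^2\|\bz_i\|^2$; hence $\mathrm{Pr}(\limsup_{t\to\infty}\|\be\|<\varepsilon)\ge\mathrm{Pr}\big(\sum_{i=1}^{N-1}(\phi_i\sigma/\lambda_i)^2\|\bz_i\|^2\le\varepsilon^2\big)$, and the latter, being the distribution function at $\varepsilon^2$ of a positively-weighted sum of independent chi-squared random variables, is given by the Moschopoulos series \cite{Moschopoulos84}, which yields \eqref{eq: Pstab2}. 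The conditioning on $\blambda$ is retained because the Laplacian spectrum, and hence the $\lambda_i$ and $\phi_i$, are random for an ER graph.

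The main obstacle is making the $N\to\infty$ passage rigorous: one must justify that the vanishing of the off-diagonal blocks of $\bSigma_{\bv}$ relative to the diagonal ones, together with the equalization of the diagonal blocks, allows the probability bound for the genuinely correlated finite-$N$ Gaussian vector $\bv$ to be replaced by the independent-case bound, even though the ambient dimension grows with $N$ and the weights $(\phi_i\sigma/\lambda_i)^2$ are themselves random. The operator-norm estimate $\|\bC-pN\,\bI\|=\max_i|d_i-pN|$ is the quantitative handle for this; everything else is a direct transcription of Lemma \ref{lemma: Covariance}, Corollary \ref{LemmaSymmetricBound}, and the proof of Theorem \ref{TheoremRing}.
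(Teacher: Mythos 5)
Your proposal follows essentially the same route as the paper: both proofs reduce the ER case to Theorem \ref{TheoremRing} by showing that degree concentration around $pN$ makes the off-diagonal blocks of $\bSigma_{\bv}$ vanish relative to the diagonal blocks (which converge to $\bF_{\bgamma}\bSigma_{\bgamma}\bF_{\bgamma}^T+pN\,\bH_{\btheta}\bSigma_{\btheta}\bH_{\btheta}^T$), so the $\bv_i$ become asymptotically independent and the Moschopoulos series applies. The only cosmetic difference is that you establish concentration via a Chernoff/union bound on $\max_i|d_i-pN|$ where the paper uses the cruder $\sqrt{Np(1-p)}$ fluctuation estimate and Laplacian eigenvalue bounds, and you explicitly flag the unrigorous step in the $N\to\infty$ passage that the paper also leaves implicit.
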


\begin{proof} The largest eigenvalue of the Laplacian matrix of any symmetric network is bounded below by the maximum degree of the network. For large ER networks ($N\to\infty$), it is also bounded above by $Np+\sqrt{Np(1-p)}$ \cite{Manaffam13}. Thus
\[d_{\max}\le\mu_{\max}\le Np+\sqrt{Np(1-p)}.\]
Similarly, the smallest non-zero eigenvalue of ER network can be bounded as 
\[d_{\min}\ge\mu_{\min}\ge Np-\sqrt{Np(1-p)}.\]
According to Lemma \ref{lemma: Covariance}, the diagonal blocks of covariance matrix of $\bv$ are 
\begin{eqnarray}
\bSigma_{ii} & = & \bF_{\bgamma} \bSigma_{\bgamma} \bF_{\bgamma}^T+\bH_{\btheta}  \bSigma_{\btheta} \bH^T_{\btheta} \sum_{l=1}^N|u_{il}|^2 d_l\nonumber\\
& = & \bF_{\bgamma} \bSigma_{\bgamma} \bF_{\bgamma}^T+Np\bH_{\btheta}  \bSigma_{\btheta} \bH^T_{\btheta},\nonumber
\end{eqnarray}
as $N\to\infty$, and the off diagonal entries are
\begin{align*}
\bSigma_{ij}&=\bH_{\btheta}  \bSigma_{\btheta} \bH^T_{\btheta} \sum_{l=1}^N u_{il}u_{jl}^* d_l\\
	        &=\bH_{\btheta}  \bSigma_{\btheta} \bH^T_{\btheta}\sum_{l=1}^N u_{il}u_{jl}^* (d_l-Np)\\
	        &\le\sqrt{Np(1-p)}\bH_{\btheta}  \bSigma_{\btheta} \bH^T_{\btheta}.
\end{align*}
Therefore,
\[\lim_{N\to\infty}\frac{\lVert\bSigma_{ij}\rVert}{\lVert\bSigma_{ii}\rVert}=0.\]
Consequently, as $N\to\infty$, $\bv_i$ become independent. The remaining of the proof is similar to that of Theorem \ref{TheoremRing} and is omitted in the interest of brevity.
\end{proof}

To study the synchronization in small-world networks, we consider the Newman-Watts model \cite{Newman99}. This model constructs a small-world network by starting from a $K$-regular ring network (Fig. \ref{fig: Ring}) as substrate, then randomly adds new links with probability $p$.
\begin{figure}[!t]
\begin{center}
\includegraphics[width=1.5in]{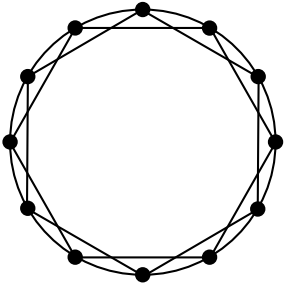}
\caption{A 4-regular ring network.}
\label{fig: Ring}
\end{center}
\end{figure}

\begin{theo}
Under assumptions \ref{ass:Symmetric} and \ref{ass:Gaussian},  the limiting probability of stable synchronization of an unweighted Newman-Watts small-world network of oscillators, with parameters $p$ and $K$, as $N\to\infty$, is lower bounded by
 \begin{align}\label{eq: Pstab2}
 P_{\mbox{\scriptsize stab}}^{\mbox{\scriptsize LB}}(\varepsilon|\blambda)=&\left[\prod_{i=2}^{N-1}\left(\frac{\phi_1~\lambda_i}{\lambda_1~\phi_i}\right)^{n}\right]\nonumber\\
&\times\sum_{j=0}^\infty a^{(N-1)}_jP\left(\frac{(N-1) n}{2}+j,\frac{\lambda_1^2\varepsilon^2}{\phi_1^2\sigma^2}\right),
 \end{align}
 where 
\begin{eqnarray}
\sigma=\limsup_{t\to\infty}~\|\bF_{\bgamma} \bSigma_{\bgamma} \bF_{\bgamma}^T+(K+Np) \bH_{\btheta}  \bSigma_{\btheta} \bH^T_{\btheta}\|^{1/2}.\nonumber
\end{eqnarray}
\end{theo}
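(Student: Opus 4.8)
The plan is to follow exactly the template of Theorem \ref{TheoremRing} and Theorem \ref{TheoremER}, since the Newman--Watts network is a superposition of a deterministic $K$-regular ring (the substrate) and an Erd\"os--R\'enyi random graph with parameter $p$ (the added links). First I would write the adjacency matrix as $\bA = \bA^{\mbox{\scriptsize ring}} + \bA^{\mbox{\scriptsize ER}}$, so that the quantity $d_l = \sum_{k=1}^N a_{lk}^2 = \sum_{k=1}^N a_{lk}$ (the network is unweighted, so $a_{lk}^2=a_{lk}$) equals $K$ plus a $\mathrm{Binomial}(N-1,p)$ random variable, whose mean is $K+(N-1)p \to K+Np$ and whose standard deviation is $\sqrt{Np(1-p)}$ to leading order. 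This is the only genuinely network-specific input; everything else is a transcription.

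Next I would invoke Lemma \ref{lemma: Covariance} to compute the blocks of $\bSigma_{\bv}$. The diagonal blocks are $\bSigma_{ii} = \bF_{\bgamma}\bSigma_{\bgamma}\bF_{\bgamma}^T + \bH_{\btheta}\bSigma_{\btheta}\bH_{\btheta}^T \sum_{l=1}^N |u_{il}|^2 d_l$, and since $\sum_l |u_{il}|^2 = 1$ and $d_l$ concentrates around $K+Np$, this converges to $\bF_{\bgamma}\bSigma_{\bgamma}\bF_{\bgamma}^T + (K+Np)\bH_{\btheta}\bSigma_{\btheta}\bH_{\btheta}^T$ as $N\to\infty$, which is exactly the claimed $\sigma^2$. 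For the off-diagonal blocks, writing $d_l = (K+Np) + (d_l - K - Np)$ and using orthonormality of the columns of $\bU$ to kill the constant part, we get $\bSigma_{ij} = \bH_{\btheta}\bSigma_{\btheta}\bH_{\btheta}^T \sum_l u_{il}u_{jl}^*(d_l - K - Np)$, whose norm is $O(\sqrt{Np(1-p)})$ while $\|\bSigma_{ii}\| = \Theta(Np)$, so $\lim_{N\to\infty}\|\bSigma_{ij}\|/\|\bSigma_{ii}\| = 0$ and the $\bv_i$ become asymptotically independent Gaussian vectors. As in Theorem \ref{TheoremRing}, $E[\bv_i]=\b0$ follows because $\delta\bgamma$, $\delta\btheta$ have zero mean and the substrate makes $\bar d_{\scriptsize\mbox{in}}$ equal to the (asymptotic) common degree, so the $d_i^{\scriptsize\mbox{in}}-\bar d_{\scriptsize\mbox{in}}$ term vanishes in expectation.

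From here the argument is identical to the ring case: whiten each block via $\bz_i = \bSigma_{ii}^{-1/2}\bv_i$, note $\|\bz_i\|$ is a time-invariant random variable with $\|\bz_i\|^2 \sim \chi^2_n$, apply Corollary \ref{LemmaSymmetricBound} to bound $\|\be\|^2 \le \sum_{i=1}^{N-1}(\phi_i\sigma/\lambda_i)^2\|\bz_i\|^2$, and then use the Moschopoulos \cite{Moschopoulos84} series for a weighted sum of independent chi-squared variables to obtain the stated regularized-gamma expression for $\mathrm{Pr}(\limsup_t\|\be\| < \varepsilon)$. I would therefore just say ``the remainder of the proof is identical to that of Theorem \ref{TheoremRing} and is omitted,'' as the authors did for Theorem \ref{TheoremER}.

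The main obstacle — and it is a mild one — is making the asymptotic claims rigorous: one must argue that $\sum_l |u_{il}|^2 d_l \to K+Np$ and $\sum_l u_{il}u_{jl}^*(d_l - K - Np) = O(\sqrt{Np(1-p)})$ \emph{with high probability} (not merely in expectation), which requires a concentration statement for the degree sequence of the ER component together with the fact that the Laplacian eigenvectors of the symmetric matrix have $\ell_\infty$-delocalized entries, so no single $d_l$ dominates the weighted sum. This is the same gap that is glossed over in the proof of Theorem \ref{TheoremER}; since that theorem is invoked as established, I would treat the present bound at the same level of rigor and lean on \cite{Manaffam13} for the spectral and degree concentration estimates of the ER part, noting that superimposing the fixed circulant ring shifts every eigenvalue by the known constant ring spectrum and does not affect the concentration.
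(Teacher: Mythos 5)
Your proposal is correct and follows essentially the same route as the paper: decompose the Newman--Watts network into its $K$-regular ring substrate plus an Erd\"os--R\'enyi component, use Lemma \ref{lemma: Covariance} to show that the diagonal blocks of $\bSigma_{\bv}$ converge to $\bF_{\bgamma}\bSigma_{\bgamma}\bF_{\bgamma}^T+(K+Np)\bH_{\btheta}\bSigma_{\btheta}\bH_{\btheta}^T$ while the off-diagonal blocks become asymptotically negligible, and then reuse the whitening / chi-squared / Moschopoulos machinery of Theorem \ref{TheoremRing}. The only real difference is one of emphasis: the paper's displayed content for this theorem is the spectral bookkeeping---it writes $\bL_{\mbox{\scriptsize NW}}=\bL_{\mbox{\scriptsize Ring}}+\bL_{\mbox{\scriptsize ER}}$ and bounds $\mu_{\min}^{\mbox{\scriptsize NW}}$ and $\mu_{\max}^{\mbox{\scriptsize NW}}$ via Weyl's inequalities together with the closed-form ring spectrum, deferring the covariance computation to ``similar to Theorem \ref{TheoremER}''---whereas you spell out the covariance and defer the spectrum; the two write-ups are complementary halves of the same argument. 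One small correction: your closing remark that superimposing the ring ``shifts every eigenvalue by the known constant ring spectrum'' is not right, since $\bL_{\mbox{\scriptsize Ring}}$ and $\bL_{\mbox{\scriptsize ER}}$ do not commute in general, so the eigenvalues of the sum are not the sums of eigenvalues; the paper accordingly claims only the one-sided Weyl bounds $\max\{\mu^{\mbox{\scriptsize Ring}}_{\min},\mu^{\mbox{\scriptsize ER}}_{\min}\}\le\mu_{\min}^{\mbox{\scriptsize NW}}$ and $\mu_{\max}^{\mbox{\scriptsize NW}}\le\mu^{\mbox{\scriptsize Ring}}_{\max}+\mu^{\mbox{\scriptsize ER}}_{\max}$. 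Because the stated bound is conditioned on $\blambda$, this does not invalidate your argument, but the spectral statement should be phrased as an inequality rather than an exact shift.
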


\begin{proof} The Laplacian matrix of a Newman-Watts small world network is
\[\bL_{\mbox{\scriptsize NW}}=\bL_{\mbox{\scriptsize Ring}}+\bL_{\mbox{\scriptsize ER}},\]
where $\bL_{\mbox{\scriptsize Ring}}$  and $\bL_{\mbox{\scriptsize ER}}$ are the laplacians of a $K$-regular ring and an Erd\"os-R\'enyi network with parameter $p$. Using Weyl's inequalities we can bound the minimum and maximum eigenvalues of the small-world \cite{Mohar91}
\begin{eqnarray*}
\max\{\mu^{\mbox{\scriptsize Ring}}_{\min},\mu^{\mbox{\scriptsize ER}}_{\min}\}\le\mu_{\min}^{\mbox{\scriptsize NW}}\le d_{\min},\\
\mu^{\mbox{\scriptsize Ring}}_{\max}+\mu^{\mbox{\scriptsize ER}}_{\max}\ge\mu_{\max}^{\mbox{\scriptsize NW}}\ge d_{\max},
\end{eqnarray*}
where the eigenvalues of a $K$-regular ring is \cite{Mohar91}
\begin{eqnarray}
\mu^{\mbox{\scriptsize Ring}}_i & = & K-2\frac{\sin\frac{iK\pi}{2N}\cos\frac{(K+2)i\pi}{2N}}{\sin\frac{i\pi}{N}},\nonumber
\end{eqnarray}
and subscripts $\min$ and $\max$ refer to smallest non-zero and maximum eigenvalue of $\bL$ in corresponding configurations, respectively. The remaining of the proof is similar to that of Theorem \ref{TheoremER} and is omitted in the interest of brevity.
\end{proof}

\section{Numerical Example}
In this section we verify our analytical results using numerical examples. We consider the van der Pol oscillator \cite{Poland94} which has the following dynamics 
\begin{align}\label{eq: osc}
\bff(\bx_{i},\bgamma_{i})=\left[\begin{array}{c}x_{i2}\\
-x_{1i}-\gamma_i (x^2_{i1}-1)x_{i2}\end{array}\right].\nonumber
\end{align}
We note that since the van der Pol oscillator has a limit cycle, as $t\to\infty$, $\bs$ is a periodic trajectory. Hence, the Jacobians are also periodic. We can, therefore, solve \eqref{eq: Trajectory} analytically using Fourier series \cite{Poland94}.

We assume that the nodes are coupled through their first states by 
\[\bh(\bx_{j},\bx_{i},\btheta_{ij})=\left[\begin{array}{c}\theta_{ij1}(x_{1j}-x_{1i})+\theta_{ij2}\\0\end{array}\right].\] 
Thus, the Jacobians of $\bff(.)$and $\bh(.)$ around $(\bs,\bargamma,\bartheta)$ are
\begin{eqnarray*}
\bF_{\bx}&=&\left[
             \begin{array}{cc}
               0 & 1  \\
               -1-2\bar{\gamma} s_{1}s_{2} &\bar{\gamma}(1-s_{1}^2) \\
             \end{array}
           \right],\label{eq: F}\\
\bH_{\bx}&=&\left[
             \begin{array}{cc}
               \bar{\theta}_{1} & 0  \\
               0 & 0  \\
             \end{array}
           \right],\label{eq: H}\\
\bF_{\gamma}&=&\left[
             \begin{array}{c}
               0 \\
               (1-s_{1}^2)s_{2}  \\
             \end{array}
           \right],\label{eq: R}\\
\bH_{\btheta}&=&\left[
             \begin{array}{cc}
               0&1 \\
	      0&0\\             \end{array}
           \right]\label{eq: P},
\end{eqnarray*}
where $\bs=[s_1~s_2]^T$. Fig. \ref{fig: MLE} depicts the maximum Lyapunov exponent of $\bF_{\bx}-\mu\bH_{\bx}$ as a function of $\mu$, where $\mu$ is the eigenvalue of Laplacian matrix of the network.
\begin{figure}[t]
\begin{center}
\includegraphics[width=3.3in]{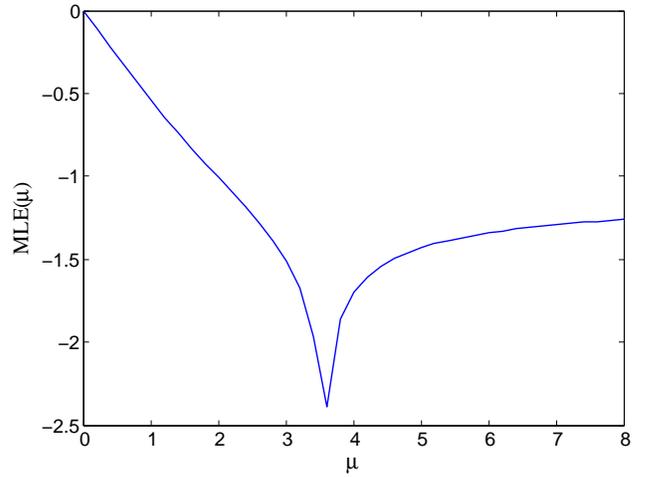}
\caption{Maximum Lyapunov Exponent (MLE) as a function of eigenvalues of Laplacian matrix of the network, $\mu$.}
\label{fig: MLE}
\end{center}
\end{figure}

Furthermore,
\begin{align}
\bv_{i}&=\left[ \begin{array}{c}
\sum_{j=1}^{N}u_{ji}^{*}\sum_{k=1}^{N}a_{jk}\delta \theta_{kj2}\\
 (1-s_{1}^2)s_{2}\sum_{j=1}^{N}u_{ji}^{*}\delta\gamma_{j}\\
 \end{array}\right].\nonumber
 \end{align}
It is clear that $\bv_{i}$ are independent of $\delta\theta_{ij1}$. 
\begin{figure}[t]
\begin{center}
 \includegraphics[width=3.3in]{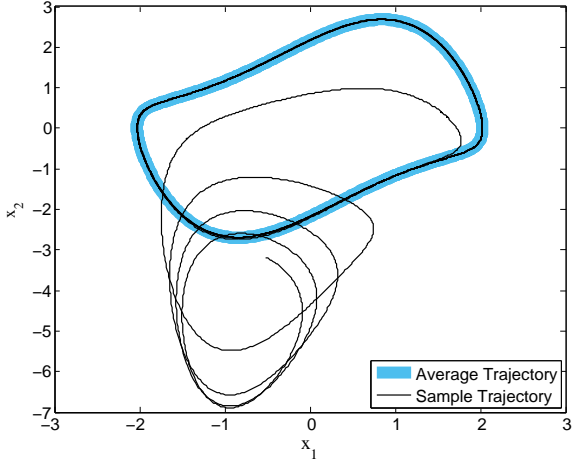}
 \caption{Synchronization manifold, $\bs$, and a sample trajectory, $\bx_1$, for a ring network of van der Pol oscillators.}
 \label{fig: Trajectory}
  \end{center}
\end{figure} 
Also, covariance matrix of $\bv_i$ of a $K$-regular ring network for $\bargamma=1$ can be calculated from \eqref{eq: Cov. Regular} as
\begin{eqnarray}
\bSigma_{ii}=\left[\begin{array}{cc}K{\bar{\theta}}_1^2\sigma_{\theta 2}^2& 0\\0 & \sigma_\gamma^2((1-s_1^2)s_2)^2\end{array}\right],\nonumber
\end{eqnarray}
and
\begin{eqnarray}
\sup \bSigma_{ii}=\left[\begin{array}{cc}K\bar{\theta}_1^2\sigma_{\theta 2}^2& 0\\0 & 9.93\sigma_\gamma^2\end{array}\right],\label{eq:supSigma}
\end{eqnarray}
where $\sup((1-s_1^2)s_2)^2$ is determined by simulation to be $9.93$ and the supremums are calculated over one period of the limit cycle. 
Hence, $\sigma=\max(\sqrt{K}\bar{\theta}_1\sigma_{\theta_2},3.15\sigma_\gamma)$.
 
Now, consider a $6$-regular ring network of size $N=100$, where $\gamma\sim\cN(1,0.01)$ and $\btheta\sim\cN([1~0]^T,0.01\bI)$. Fig. \ref{fig: Trajectory} depicts the synchronization manifold, $\bs$, and a sample trajectory, $\bx_1$, converging to $\bs$. Fig. \ref{fig: PstabParameters} presents the analytical lower bound on the probability of stable $\varepsilon$-synchronization in the considered ring network, as a function of $\sigma_{\theta_2}$ and $\sigma_{\gamma}$ for $\varepsilon=0.40$. As it can be seen, the probability of synchronization falls sharply as the variances of mismatches increase. Moreover, we observe that the range of $\sigma_\gamma$ and $\sigma_{\theta_2}$ for which the network is stable with high probability is rectangular. This is explained by noting that $\sigma$ is related to the maximum of $\sigma_\gamma$ and $\sigma_{\theta_2}$, as it can be seen in \eqref{eq:supSigma}. Another observation from Fig. \ref{fig: PstabParameters} is that even small mismatches leads to instability of the synchronization state even with a relatively large tolerance of $\varepsilon=0.40$.


We now proceed to compare a ring network, an Erd\"os-R\'enyi network and a Newman-Watts (small-world) network. For a fair comparison, we choose the network parameters such that all networks have the same number of nodes and the same average node degree. That is, we consider a $N=100$ node, $10$-regular ring, an Erd\"os-R\'enyi network with $N=100$ and randomness parameter $p=0.1$, and a Newman-Watts network generated from a $N=100$ node, $6$-regular ring and link addition probability $p=0.4\times 100/94=0.4167$. Fig. \ref{fig: PstabN} presents the probability of stability versus network size, $N$, for these three networks with $\varepsilon=0.4$. As it can be seen for the Ring network (Fig. \ref{fig: PstabN} (a)), as $N$ increases, even though the variance of the mismatch input is constant, $\sigma=3.15\sigma_\gamma$, the $\varepsilon$-synchronization of the network deteriorates. This is because as the degree of the nodes are kept constant and network size increases, the algebraic connectivity\footnote{Algebraic connectivity is defined as the second smallest eigenvalue of the Laplacian matrix of a network.\cite{Mohar91}.} of the network,  
\[\mu^{\scriptsize\mbox{ring}}_{N-1}=k-2\frac{\sin(k\pi/2N)\cos((k+2)\pi/2N)}{\sin(\pi/N)},\]
decreases. For large $N$, in our example, smaller algebraic connectivity means smaller MLE (See Fig. \ref{fig: MLE}), hence the probability of $\varepsilon$-synchronization falls sharply. 

\begin{figure}[t]
\begin{center}
\includegraphics[width=3.3in]{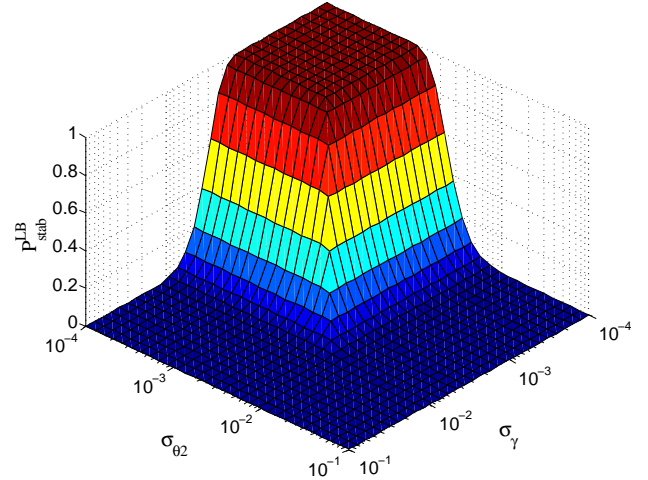}
\caption{$P^{\mbox{\scriptsize LB}}_{\mbox{\scriptsize stab}}$ in the ring network as a function of $\sigma_{\theta_2}$ and $\sigma_{\gamma}$ for $\varepsilon=0.4$.}
\vspace{-0.1in}
\label{fig: PstabParameters}
\end{center}
\end{figure}
Fig. \ref{fig: PstabN} (b) presents the probability of $\varepsilon$-stability of the Erd\"os-R\'enyi network. It is interesting to note that since the network is disconnected for smaller network sizes, the network is not synchronized. As network size continues to grow, the network becomes connected and synchronization behavior emerges. This behavior continues until the growth in the network size, increases the variance of the mismatch input, $\bv$, to the extent that the network falls out of $\varepsilon$-stability.

Fig. \ref{fig: PstabN} (c) presents the probability of $\varepsilon$-stability for the Newman-Watts network. It is interesting to note the mechanisms at work as $N$ increases: At first, when $N$ is small there are very few added links given a small value of $p$. Thus, the network has not yet transitioned into a small-world and its algebraic connectivity is still quite close to that of the ring topology. Thus, as the size of the network increases its second smallest eigenvalue decreases. Since the variance of mismatch, $\bv$, is constant ($\sigma_b=3.15\sigma_\gamma$), the probability of stability decreases. As $N$ continues to increase, by adding links in random, sufficient number of long range connections are established and the small-world transition is achieved. Consequently, algebraic connectivity of the network starts to grow rapidly. Hence, $\lambda_i$ increase and, therefore, $P^{\mbox{\scriptsize LB}}_{\mbox{\scriptsize stab}}$ improves. As $N$ continues to increases $\sqrt{K+Np}\sigma_{\theta_2}$ overtakes $3.15\sigma_\gamma$ in the variance of mismatch and its destructive effect surpasses the improvement caused by transition to small-world. Consequently, we observe that $P^{\mbox{\scriptsize LB}}_{\mbox{\scriptsize stab}}$ begins to drop.

\begin{figure}[!t]
\begin{center}
\begin{tabular}{c}
\includegraphics[width=3.3in]{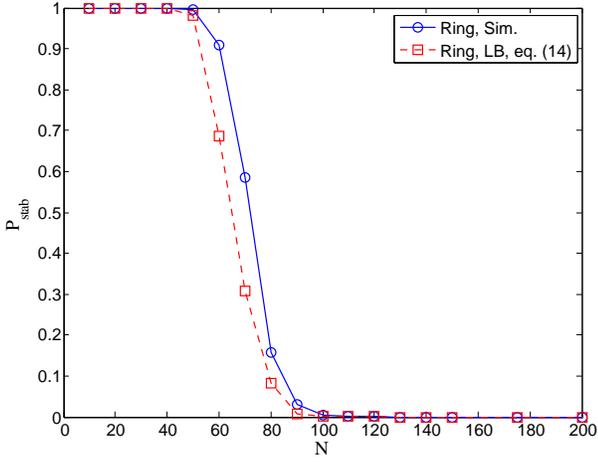}\\(a)\\
\includegraphics[width=3.3in]{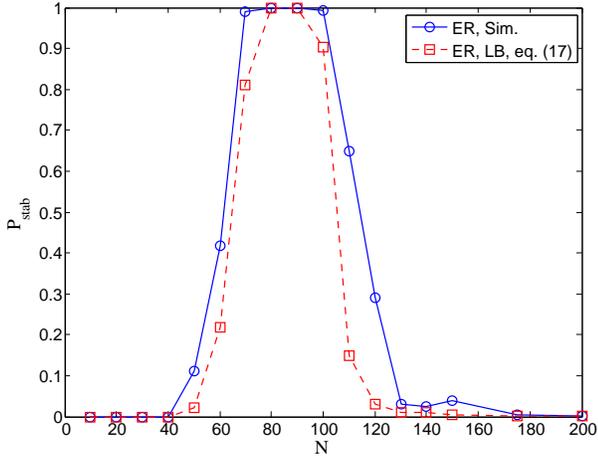}\\(b)\\
\includegraphics[width=3.3in]{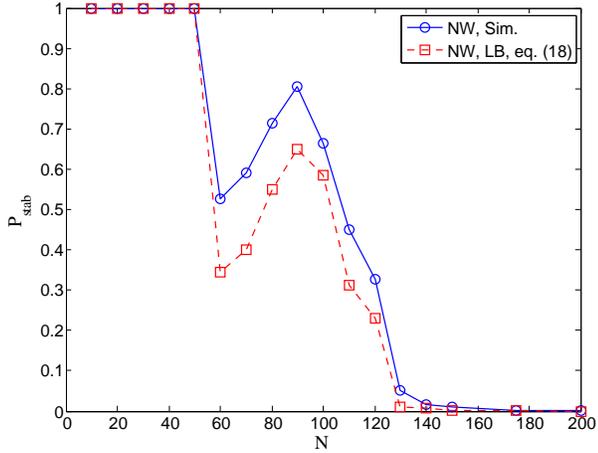}\\(c)
\end{tabular}
\caption{Probability of stability as a function $N$, for (a) ring, (b) Erd\"os-R\'enyi, and (c) Newman-Watts networks.}
\label{fig: PstabN}
\end{center}
\end{figure}

\begin{figure}[!t]
\begin{center}
\begin{tabular}{c}
\includegraphics[width=3.3in]{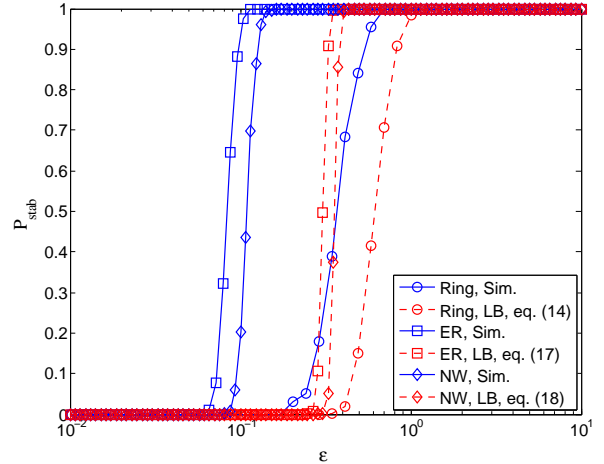}\\(a)\\
\includegraphics[width=3.3in]{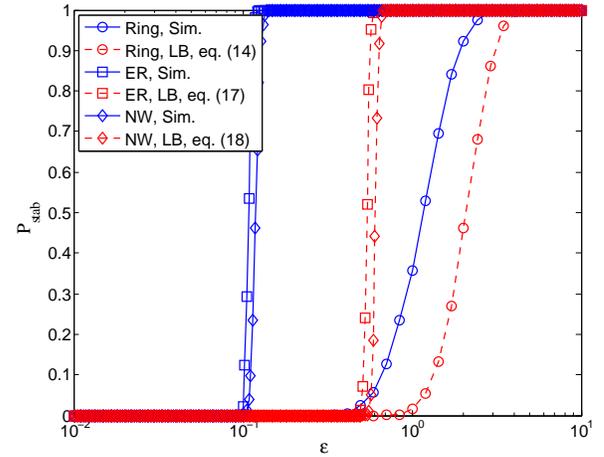}\\(b)\\
\includegraphics[width=3.3in]{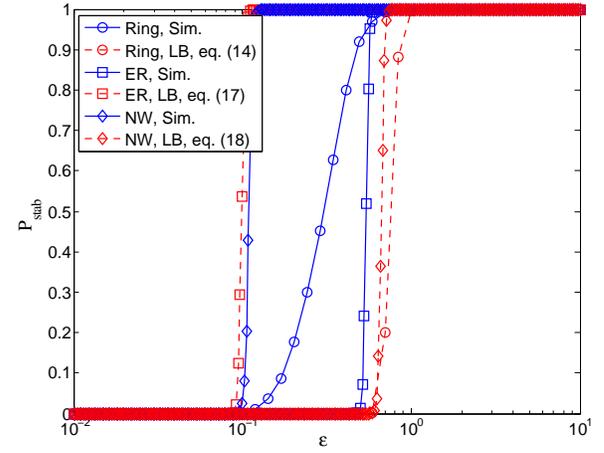}\\(c)
\end{tabular}
\caption{$P_{\mbox{\scriptsize stab}}$ as a function of $\varepsilon$ for the ring, NW and Erd\"{o}s-R\'enyi networks: (a) $N=100, \bar{d}=10$, (b) $N=200, \bar{d}=10$, and $N=200, \bar{d}=20$.}\label{fig: Pe}
\vspace{-0.2in}
\end{center}
\end{figure}
Figs. \ref{fig: Pe} (a) through (c) depict the probability of $\varepsilon$-stability as a function of $\varepsilon$ in the considered Ring, Erd\"os-R\'eyni, and Newman-Watts networks for different $N$ and $\bar{d}$: (a) $N=100, \bar{d}=10$, (b) $N=200, \bar{d}=10$, and $N=200, \bar{d}=20$. As it can be seen, the analytical lower bound and the simulation result for the ring network are reasonably close. This is due to the homogeneity of its node degrees, i.e. $d_i=K$, which holds true for the other networks as $N$ approaches infinity. The other point directly observed from these figures is that the rise in the probability of the stability is much sharper in the Erd\"os-R\'eyni and Newman-Watts networks, this is because the spread of the spectrum, [$\mu_{\min},\mu_{\max}$], for these networks are smaller than that of ring topology. This, in fact, causes the Lyapunov exponents of the traverse modes to be closer to each other and hence the networks become easily and rapidly synchronized. Other interesting observation is that the results for the Erd\"os-R\'eyni and Newman-Watts networks are similar. The reason can be sought in the effectiveness of communication in both networks to each other. As it has been shown in \cite{Watts98}, even though small-worlds are strongly locally connected (due to ring substrate), they have almost the same average shortest path length of Erd\"os-R\'eyni networks. This results in almost the same communication efficiency in small-worlds as Erd\"os-R\'eyni network. Hence, the synchronizability of both types of networks are similar.


\section{Conclusion}
We had seen that mismatch in either couplings or the local dynamics does not allow perfect synchronization. Rather, the network can only be synchronized to a neighborhood of the synchronization manifold. Considering this relaxed notion of synchronization we have provided a generalized master stability function that takes the mismatches into account. We then used this master stability function to derive lower bounds on the probability of synchronization in regular, Erd\"os-R\'enyi, and Newman-Watts networks. We verified our results using numerical examples involving networks of van der Pol oscillators. These examples clearly shows the different phase transition behavior of the different network models.

\appendices
\section{Proof of Lemma 2}\label{NonDiag}
\begin{proof}
The state space equation for the $\bbeta_i$ can be written as
\begin{eqnarray}\label{eq: NodeDiag1}
\dot{\bbeta_{i}}&=&
\left(\bF_{\bx}-\mu \bH_{\bx}\right)\bbeta_{i}+(\bF_{\bx}-\bH_{\bx} )\bbeta_{i+1}+\bv_i(t),
\end{eqnarray}
for $i\ne N$, and
\begin{eqnarray}\label{eq: NodeDiag2}
\dot{\bbeta_N}&=&
\left(\bF_{\bx}-\mu \bH_{\bx}\right)\bbeta_N+\bv_N(t).
\end{eqnarray}
The solution of \eqref{eq: NodeDiag1} and \eqref{eq: NodeDiag2} are 
\begin{eqnarray}\label{eq: Solution}
\bbeta_{i}(t)&=&\mathbf{\Phi}(t,0)\bbeta_{i}(0)+\int_{0}^{t}\mathbf{\Phi}(t,\tau)\bv_{i}(\tau)d\tau\nonumber\\
&&+\int_{0}^{t}\mathbf{\Phi}(t,\tau)(\bF_{\bx}-\bH_{\bx})\bbeta_{i+1}(\tau)d\tau, \nonumber
\end{eqnarray}
for $i\ne N$ and
\begin{eqnarray}\label{eq: Solution}
\bbeta_N(t)&=&\mathbf{\Phi}(t,0)\bbeta_N(0)+\int_{0}^{t}\mathbf{\Phi}(t,\tau)\bv_N(\tau)d\tau,\nonumber
\end{eqnarray}
where $\mathbf{\Phi}(t,\tau)=\mathbf{Z}(t)\mathbf{Z}^{-1}(\tau)$, and $\mathbf{Z}$ is the normal fundamental matrix of $\bF_{\bx}-\mu\bH_{\bx}$ \cite{Daleckii74}.

Applying triangle inequality yields
\begin{eqnarray}
\lVert\bbeta_{i}(t)\rVert &\leq& \lVert\mathbf{\Phi}(t,0)\rVert \lVert\bbeta_{i}(0)\rVert\nonumber\\
&&+ \int_{0}^{t}\lVert(\bF_{\bx}-\bH_{\bx})\bbeta_{i+1}(\tau)
+\bv_{i}(\tau)\rVert\lVert\mathbf{\Phi}(t,\tau)\rVert d\tau,\nonumber
\end{eqnarray}
for $i\in\{1,...,N-1\}$, and
\begin{eqnarray}
\lVert\bbeta_{N}(t)\rVert &\leq&\lVert\mathbf{\Phi}(t,0)\rVert \lVert\bbeta_N(0)\rVert +  \int_{0}^{t}\lVert\bv_N(\tau)\rVert\lVert\mathbf{\Phi}(t,\tau)\rVert d\tau,\nonumber
\end{eqnarray}
which, as $t\to\infty$, yields
\begin{eqnarray}
\limsup_{t\to\infty}\lVert\bbeta_{i}\rVert & \leq & \lVert\bbeta_{i}(0)\rVert\limsup_{t\to\infty}\lVert\mathbf{\Phi}(t,0)\rVert\nonumber\\
&&+\limsup_{t\to\infty}\lVert\bv_{i}\rVert \limsup_{t\to\infty}\int_{0}^{t}\lVert\mathbf{\Phi}(t,\tau))\rVert d\tau\nonumber\\
&&+\limsup_{t\to\infty}\lVert\bF_{\bx}-\bH_{\bx}\rVert\limsup_{t\to\infty}\lVert\bbeta_{i+1}\rVert\nonumber\\
&&\quad\quad\quad\quad\times\limsup_{t\to\infty}\int_{0}^{t}\lVert\mathbf{\Phi}(t,\tau)\rVert d\tau,\label{Norm Error1}
\end{eqnarray}
and
\begin{align}
\limsup_{t\to\infty}\lVert\bbeta_{N}\rVert \leq & \lVert\bbeta_N(0)\rVert \limsup_{t\to\infty}\lVert\mathbf{\Phi}(t,0)\rVert \nonumber\\
&+\limsup_{t\to\infty}\lVert\bv_{N}\rVert\limsup_{t\to\infty}\int_{0}^{t}\lVert\mathbf{\Phi}(t,\tau)\rVert d\tau.\label{Norm Error2}
\end{align}
We know that there exists positive real $\phi$ such that \cite{Daleckii74}
\begin{align*}
\lVert\mathbf{\Phi}(t,\tau)\rVert \leq \phi e^{-\lambda(t-\tau)},
\end{align*}
where $\lambda$ is the maximum Lyapunov exponent of $\bF_{\bx}-\mu\bH_{\bx}$. If $\lambda>0$ this yields
\begin{eqnarray}
\limsup_{t\to\infty}\lVert\mathbf{\Phi}(t,0))\rVert & = & 0,\nonumber \\
\limsup_{t\to\infty}\int_{0}^{t}\lVert\mathbf{\Phi}(t,\tau))\rVert d\tau & \le & \frac{\phi}{\lambda}.\nonumber
\end{eqnarray}
Substituting in \eqref{Norm Error1} and  \eqref{Norm Error2} yields, 
\begin{eqnarray}
\limsup_{t\to\infty}\lVert\bbeta_{i}\rVert & \leq & \frac{\phi}{\lambda} \limsup_{t\to\infty}\lVert\bF_{\bx}-\bH_{\bx}\rVert\lVert\bbeta_{i+1}\rVert\nonumber\\
&&+\frac{\phi}{\lambda}\limsup_{t\to\infty}\lVert\bv_{i}\rVert,\nonumber
\end{eqnarray}
and
\begin{eqnarray}
\limsup_{t\to\infty}\lVert\bbeta_{N}\rVert & \leq & \frac{\phi}{\lambda} \limsup_{t\to\infty}\lVert\bv_{N}\rVert.\nonumber
\end{eqnarray}
Solving the recursive inequalities we get
\begin{eqnarray}
\limsup_{t\to\infty}\lVert\bbeta_{i}\rVert & \leq & \sum_{j=i}^{N} \left(\frac{\phi}{\lambda}\right)^{N-j+1}\limsup_{t\to\infty}\lVert\bF_{\bx}+\bH_{\bx}\rVert^{N-j}\nonumber\\
&&\quad\quad\quad\quad\times\limsup_{t\to\infty}\lVert\bv_{j}\rVert.\nonumber
\end{eqnarray}
\end{proof}

\section{Covariance of $\bv$}\label{covariance}
The covariance of $\bv$ is 
\begin{align}\label{correlation1}
\bSigma_{\bv}={E}[(\bv-\bar{\bv})(\bv-\bar{\bv})^{T}]= [\bSigma_{ij}],\nonumber
\end{align}
where $\bar{\bv}=\bh(\bs,\bs,\bar{\btheta})\otimes(\bD^{\scriptsize\mbox{in}}-\bar{d}_{\scriptsize\mbox{in}}\bI)$ and $\bD^{\scriptsize\mbox{in}}=\mbox{diag}([d_{1}^{\scriptsize\mbox{in}} \cdots d_{N}^{\scriptsize\mbox{in}}])$. Thus, the $ij$th block of $\bSigma_{\bv}$ is
\begin{eqnarray}
\bSigma_{ij}&=&\sum_{k,l\in\cN}u_{ik}u_{jl}^{\star}E\left[\left(\bF_{\bgamma}\delta\bgamma_{k}+\sum_{m\in\cN}a_{mk}\bH_{\btheta}\delta\btheta_{mk}\right)\right.\nonumber\\
&&\times \left.\left(\bF_{\bgamma}\delta\bgamma_{l}+\sum_{n\in\cN}a_{ln}\bH_{\btheta}\delta\btheta_{nl}\right)^{T}\right]\nonumber\\
&=&\sum_{k,l\in\cN}u_{ik}u_{jl}^{\star}\bF_{\bgamma}\bSigma_{\bgamma}\bF_{\bgamma}^{T}\nonumber\\
&&+\sum_{k,l,m,n\in\cN}u_{ik}u_{jl}^{\star}a_{km}a_{ln}\bH_{\btheta}\bSigma_{\btheta}\bH_{\btheta}^{T}\mathbbm{1}_{k=l}\mathbbm{1}_{m=n}\nonumber\\
&=&\bF_{\bgamma}\bSigma_{\bgamma}\bF_{\bgamma}^{T}1_{i=j}+\sum_{k,m\in\cN}u_{ik}u_{jk}^{\star}\lvert a_{km}\rvert^{2}\bH_{\btheta}\bSigma_{\btheta}\bH_{\btheta}^{T}.\nonumber
\end{eqnarray}

\end{document}